\newcommand{\prs}{\langle\;,\;\rangle}
\newcommand{\too}{\longrightarrow}
\newcommand{\an}{{\pi_{\#}}}
\newcommand{\G}{{\cal G}}
\newcommand{\h}{{\cal H}}
\newcommand{\M}{{\cal M}}
\newcommand{\D}{{\cal D}}
\newcommand{\Om}{\Omega}
\newcommand{\wi}{\widetilde}
\newcommand{\al}{\alpha}
\newcommand{\be}{\beta}
\newcommand{\ga}{\gamma}
\newcommand{\la}{\lambda}
\font\bb=msbm10
\def\C{\hbox{\bb C}}
\newtheorem{theorem}{Theorem}[section]
\newtheorem{proposition}{Proposition}[section]
\newtheorem{lemma}{Lemma}[section]
\newtheorem{corollary}{Corollary}[section]
\newtheorem{remark}{Remark}
\DeclareMathOperator*{\ad}{ad}
\DeclareMathOperator*{\tr}{tr}
\DeclareMathOperator*{\Div}{div}
\title{Multiplicative deformations of spectrale triples associated to left invariant metrics on Lie groups}
\author{Amine BAHAYOU \& Mohamed BOUCETTA}
\date{}
\begin{document}
\maketitle

\begin{abstract} We study the triple $(G,\pi,\prs)$ where $G$ is a connected and simply connected Lie group, $\pi$ and $\prs$ are, respectively,  a multiplicative Poisson tensor and a left invariant Riemannian metric on $G$ such that  the necessary conditions, introduced by Hawkins, to the existence of a non commutative deformation (in the direction of $\pi$) of the spectrale triple associated to $\prs$ are satisfied. We show that the geometric problem of the classification of  such triple $(G,\pi,\prs)$ is equivalent to  an algebraic one.
We solve this algebraic problem in  low dimensions  and we give the list of all
$(G,\pi,\prs)$ satisfying Hawkins's conditions, up to dimension four.
\end{abstract}
\paragraph{MSC classification:} 58B34; Secondary 46I65, 53D17
\paragraph{keywords:}Poisson-Lie groups, contravariant connections, metacurvature, unimodularity, specrale triples.
\section{Introduction}
In \cite{haw1} and \cite{haw2}, Hawkins showed that if a deformation of the graded
algebra of differential forms on  a Riemannian manifold $(M,\prs)$
 comes from a deformation of the spectral triple
describing the Riemannian manifold $M$, then the Poisson tensor
$\pi$ (which characterizes the deformation) and the Riemannian
metric satisfy the following conditions:
\begin{enumerate}
\item The associated metric contravariant connection $\cal D$ is \emph{flat}.
\item The metacurvature of $\cal D$ vanishes, ($\cal D$ is \emph{metaflat}).
\item The Poisson tensor $\pi$
is compatible with the Riemannian volume $\mu$:
$$d(i_{\pi}\mu)=0.$$
\end{enumerate}
The metric contravariant connection associated naturally to any
couple of pseudo-Riemannian metric and Poisson tensor is an
analogue of the Levi-Civita connection. It has appeared first in
\cite{bou1}. The metacurvature, introduced by Hawkins in \cite{haw2}, is a
$(2,3)$-tensor field (symmetric in the contravariant indices and
antisymmetric in the covariant indices) associated naturally to
any torsion-free and flat contravariant connection.

 In \cite{haw2}, Hawkins studied completely the geometry of the triples $(M,\prs,\pi)$ satisfying 1-3 when $M$ is compact and $\prs$ is Riemannian. In \cite{bou3}, the second author gave a method which permit the construction of a large class of triples $(M,\prs,\pi)$ satisfying 1-3. We call the conditions 1-3 {\it Hawkins's conditions} and a couple $(\pi,\prs)$ satisfying 1-2 will be called {\it flat} and {\it metaflat}.\\

 In this paper, we study the triples $(G,\pi,\prs)$ satisfying Hawkins's conditions, where $G$ is a connected and simply connected Lie group endowed with  a multiplicative Poisson tensor $\pi$  and  a left invariant Riemannian metric $\prs$. We reduce the geometric problem of classifying such triples to  an   algebraic one and we solve it when the dimension of the Lie group is $\leq4$. In \cite{bah}, the authors gave the complete description of the triples
 $(G,\pi,\prs)$ satisfying Hawkins's conditions when $G$ is the $2n+1$-dimensional Heisenberg group.\\

 To state our main results, let us introduce the notion of {\it Milnor Lie algebra} which will be central in this paper and  recall briefly some classical facts about Poisson-Lie groups. The notion of Poisson-Lie group was first introduced by Drinfel'd \cite{dr} and studied by Semenov-Tian-Shansky \cite{sts} (see also \cite{lu-wei}).
 \begin{enumerate}\item A \emph{ Milnor Lie algebra} is a finite dimensional real Lie algebra $\G$ endowed with a scalar product $\prs$ such that:
 \begin{enumerate}\item the Lie subalgebra $S=\{u\in\mathcal{G}, \ad_u+\ad_u^t=0\}$ is abelian ($\ad_u^t$ denotes the adjoint of $\ad_u$ w.r.t. $\prs$),
\item the derived ideal $[\G,\G]$ is abelian   and $S^\perp=[\G,\G]$ ($S^\perp$ is the orthogonal of $S$).\\ This terminology is justified by a classical result of Milnor. Indeed, in  \cite{mi}, Milnor showed that a left invariant Riemannian metric on a Lie group is flat if and only if its Lie algebra is a semi-direct product of an abelian algebra $\mathfrak{b}$ with an abelian ideal $\mathfrak{u}$ and, for any $u\in\mathfrak{b}$, $ad_u$ is skew-symmetric. This result can be formulated in a more precise way and, in Proposition \ref{milnor}, we will show that a left invariant Riemannian metric on a Lie group is flat if and only if its Lie algebra is a Milnor Lie algebra.
\end{enumerate}
\item Let $G$ be a Lie group and $\G$ its Lie algebra. A  Poisson tensor $\pi$ on $G$ is called  multiplicative if, for any $a,b\in G$, $$\pi(ab)=(L_{a})_*\pi(b)+(R_{b})_*\pi(a),$$ where $(L_{a})_*$ (resp. $(R_{b})_*$) denotes the tangent map of the left  translation of $G$ by $a$ (resp. the right  translation of $G$ by $b$). Pulling $\pi$ back to the identity
element $e$ of $G$ by left translations, we get a map
$\pi_\ell:G\too{\cal G}\wedge{\cal G}$
 defined by $\pi_\ell(g)=(L_{g^{-1}})_*\pi(g)$. Let
$\xi:=d_e\pi_\ell:{\cal G}\too{\cal G}\wedge{\cal G}$ be the intrinsic derivative of
$\pi_\ell$ at $e$.  It is well-known that $(\G,[\;,\;],\xi)$ is a Lie bialgebra, i.e., $\xi$ is a $1$-cocycle relative to the adjoint representation of $\G$ on $\G\wedge\G$, and
the dual map of $\xi$, $[\;,\;]^*:{\cal G}^*\times{\cal G}^*\too{\cal G}^*$,
is a Lie bracket on ${\cal G}^*$. It is well-known also that $(\G^*,[\;,\;]^*,\rho)$ is also a Lie bialgebra, where   $\rho:\G^*\too\G^*\wedge\G^*$ is the dual of the Lie bracket on $\G$. Note that $\rho=-d$ where $d$ is the restriction of the differential to left invariant $1$-forms.\\ A Poisson-Lie group endowed with a left invariant Riemannian metric will be called {\it Riemannian Poisson-Lie group}.

For any scalar product $\prs$ on a Lie algebra $\G$, we denote by $\prs^*$ the associated scalar product on $\G^*$.
\end{enumerate}

Let us state our main results:
\begin{theorem}\label{main1}
Let $(G,\pi,\langle\;,\;\rangle)$ be a Riemannian Poisson-Lie group  and $(\mathcal{G}^*,[\,,\,]^*,\rho)$ its dual Lie bialgebra. Then $(\pi,\prs)$  is flat and metaflat if and only if:\begin{enumerate}
 \item $(\mathcal{G}^*,[\,,\,]^*,\langle\;,\;\rangle^*_e)$ is a Milnor Lie algebra, \item for any $\alpha,\beta,\gamma\in S=\{\alpha\in\mathcal{G}^*,\ \ad_\alpha+\ad_\alpha^t=0\}$,
\begin{equation}\label{flat}
\ad\nolimits_\alpha\ad\nolimits_\beta\rho(\gamma)=0.
\end{equation}
\end{enumerate}
\end{theorem}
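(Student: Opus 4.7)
The plan is to translate both the flatness and the metaflatness of the contravariant connection $\mathcal{D}$ from global conditions on $G$ into algebraic conditions on the dual Lie bialgebra $\mathcal{G}^*$, using the combined effect of the left invariance of the metric and the multiplicativity of $\pi$.

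First I would apply the Koszul formula for the metric contravariant connection $\mathcal{D}$ to a pair of left-invariant $1$-forms $\alpha,\beta$. The three terms involving $\pi^{\#}$ acting on inner products vanish because left-invariant forms have constant inner products under the left-invariant metric. The remaining terms involve the Koszul bracket $[\;,\;]_\pi$; a standard fact for multiplicative Poisson tensors is that this bracket, restricted to left-invariant $1$-forms, coincides with the dual Lie bracket $[\;,\;]^*$. Hence $\mathcal{D}$ preserves the subspace of left-invariant $1$-forms and, on this subspace (identified with $\mathcal{G}^*$), reduces to the Levi-Civita-type bilinear form $L:\mathcal{G}^*\times\mathcal{G}^*\to\mathcal{G}^*$ of the pair $([\;,\;]^*,\prs^*_e)$.

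Since the curvature of $\mathcal{D}$ is a tensor and any covector at any point extends to a left-invariant $1$-form, flatness of $\mathcal{D}$ on $G$ is equivalent to the vanishing of the algebraic curvature of $L$ on $\mathcal{G}^*$, i.e.\ to the flatness of the left-invariant metric on the dual group $G^*$. By Proposition \ref{milnor} this is precisely condition (1): $(\mathcal{G}^*,[\;,\;]^*,\prs^*_e)$ is a Milnor Lie algebra.

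For metaflatness I would then evaluate Hawkins's metacurvature on triples of left-invariant $1$-forms. The tensor $\mathcal{M}$ mixes $\mathcal{D}$ with operations involving the exterior derivative, which on left-invariant $1$-forms equals $-\rho$. Using the explicit description of $L$ provided by the Milnor structure (so that $L$ vanishes on $S\times S$ and on $S^\perp\times S^\perp$ and equals $\tfrac{1}{2}[\;,\;]^*$ on the mixed terms), together with the symmetry of $\mathcal{M}$ in its contravariant indices and antisymmetry in its covariant ones, most components of $\mathcal{M}$ vanish automatically once condition (1) holds; the only surviving obstruction is precisely $\ad_\alpha\ad_\beta\rho(\gamma)$ for $\alpha,\beta,\gamma\in S$, giving (\ref{flat}). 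The converse follows because the same identification shows (\ref{flat}) is sufficient to kill every component of $\mathcal{M}$ once a Milnor structure is in place.

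The main obstacle is the last step: the metacurvature is a long expression and there are many triples of types to test (each of the three slots being in $S$ or in $S^\perp$). The careful bookkeeping needed to verify that condition (\ref{flat}) is both necessary and sufficient once condition (1) holds—together with clean use of the identity $\rho = -d$ on left-invariant $1$-forms and of the $1$-cocycle property of $\rho$—is where the technical core of the proof lies.
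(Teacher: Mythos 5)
Your reduction of the flatness condition is exactly the paper's: the metric terms of the Koszul formula vanish on left invariant forms, the Koszul bracket of left invariant $1$-forms is the dual Lie bracket, so $\D$ restricts to the Levi-Civita product of $([\;,\;]^*,\prs^*_e)$ on $\G^*$, and tensoriality plus Proposition \ref{milnor} gives condition (1). Two small corrections there: the induced product is \emph{not} $\tfrac12[\;,\;]^*$ on the mixed terms; once the Milnor structure is in place one has $\D_\al\ga=\ad_\al\ga$ for $\al\in S$ and $\D_\al\ga=0$ for $\al\in[\G^*,\G^*]$, which in particular makes every element of $S$ a $\D$-parallel $1$-form --- a fact you will need.

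The genuine gap is in the metaflatness half. You treat the evaluation of $\M$ on left invariant forms as an algebraic bookkeeping exercise in terms of the product $L$, but the metacurvature is \emph{not} determined by the restriction of $\D$ to $\G^*$: formula \eqref{metacourbure2}, $\M(\al,\be,\ga)=-\D_\be\D_\ga d\al$, is only valid when $\al$ and $\ga$ are parallel, i.e.\ lie in $S$. For the components with two arguments in $S^\perp=[\G^*,\G^*]$ the forms are not parallel, the generalized bracket of two such forms is governed by the full formula \eqref{eq6}, and the definition \eqref{metacourbure} forces you to expand $\al=\sum f_i\,dg_i$ with non-left-invariant functions $f_i,g_i$; nothing ``vanishes automatically'' at that point. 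This is where the paper spends most of its effort: it proves the identity $\M(\al,\be,\be)=-[\be,[\be,d\al]_\pi]_\pi$ for $\al,\be\in[\G^*,\G^*]$ (using flatness of $\D$ along the way) and then kills the right-hand side because $[\G^*,\G^*]$ is abelian; polarization in the symmetric slots finishes that case. You correctly identify this step as ``the technical core'' but supply no mechanism for it, so as it stands the claim that only the $S\times S\times S$ components survive is asserted rather than proved. Supplying that identity (or an equivalent device) is what is missing; the rest of your outline matches the paper's argument.
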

\begin{theorem}\label{main2} Let $(G,\pi)$ be a connected and unimodular  Poisson-Lie group  and let $\mu$ be a left invariant volume form on $G$. Then $d\left(i_{\pi}\mu\right)=0$ if and only if:\begin{enumerate}
 \item $(\mathcal{G}^*,[\,,\,]^*)$ is an unimodular Lie algebra,
 \item for any $u\in\G$,
\begin{equation}\label{unimodular}
\rho(i_{\xi(u)}\mu_e)=0,
\end{equation}where $\xi$ is the $1$-cocycle associated to $\pi$ and $\rho=-d$ is the dual $1$-cocycle extended as a differential to $\wedge^{\dim\G-2}\G^*$.\end{enumerate}
\end{theorem}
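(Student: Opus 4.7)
The plan is to expand $d(i_\pi\mu)$ in a left-invariant coframe, use the multiplicativity of $\pi$ to rewrite the coefficients in terms of the $1$-cocycle $\xi$, and then separate the resulting pointwise identity into the contribution at the identity, yielding condition~(1), and the remaining contribution, yielding condition~(2). I fix a basis $(e_i)$ of $\G$ with structure constants $[e_i,e_j]=c^k_{ij}e_k$, the associated left-invariant fields $X_i$ and dual $1$-forms $\theta^i$; I write $\pi=\tfrac12\pi^{ij}X_i\wedge X_j$ with $\pi^{ij}=-\pi^{ji}$ and $\pi^{ij}(e)=0$, and set $\nu_k=i_{X_k}\mu$. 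A direct Cartan-type computation --- using the hypothesis that $G$ is unimodular in the form $\Li_{X_i}\mu=0$, equivalently $\sum_k c^k_{kj}=0$ --- should yield
\[
 d(i_\pi\mu)=\bigl(X_j(\pi^{ij})-\tfrac12\pi^{jk}c^i_{jk}\bigr)\nu_i.
\]

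Next I invoke multiplicativity: differentiating $\pi_\ell(gh)=\pi_\ell(h)+\mathrm{Ad}_{h^{-1}}\pi_\ell(g)$ at $h=e$ gives $X_v(\pi_\ell)=\xi(v)-\ad_v\pi_\ell$ for every $v\in\G$. An index contraction --- where the trace $\sum_j c^j_{jk}$ again cancels by unimodularity of $G$ --- rewrites the coefficient of $\nu_i$ above as $\tr(\ad\nolimits^*_{e^i})+\tfrac12\sum_{j,k}\pi^{jk}(a)c^i_{jk}$, so that $d(i_\pi\mu)=0$ on $G$ is equivalent to the pointwise identity
\[
 \tr(\ad\nolimits^*_{e^i})+\tfrac12\sum_{j,k}\pi^{jk}(a)\,c^i_{jk}=0,\qquad i=1,\dots,n,\ a\in G.
\]
Evaluating at $a=e$, where $\pi^{jk}(e)=0$, isolates $\tr(\ad\nolimits^*_{e^i})=0$ for all $i$, which is precisely the unimodularity of $(\G^*,[\,,\,]^*)$ and hence condition~(1); granted this, what remains is $\sum_{j,k}\pi^{jk}(a)c^i_{jk}=0$, equivalently $[\pi_\ell(a)]=0\in\G$ for every $a\in G$, where $[\,\cdot\,]\colon\wedge^2\G\to\G$ denotes the bracket map.

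It remains to identify this last condition with condition~(2). The $\G$-valued function $a\mapsto[\pi_\ell(a)]$ satisfies, by the cocycle relation and the $\mathrm{Ad}$-equivariance $[\,\cdot\,]\circ\ad_u=\ad_u\circ[\,\cdot\,]$ on $\wedge^2\G$ (a consequence of Jacobi), the left-invariant ODE $X_u[\pi_\ell]=[\xi(u)]-\ad_u[\pi_\ell]$ with zero initial value at $e$; on the connected group $G$ this makes $[\pi_\ell]\equiv 0$ equivalent to $[\xi(u)]=0$ for every $u\in\G$. Finally, a short algebraic computation --- using unimodularity of $G$ once more, in the guise $d(i_{X_j}i_{X_i}\mu)=-c^k_{ij}\nu_k$ --- gives the clean identity $\rho(i_\eta\mu_e)=i_{[\eta]}\mu_e$ for every $\eta\in\wedge^2\G$, so $\rho(i_{\xi(u)}\mu_e)=0$ is precisely $[\xi(u)]=0$, matching condition~(2). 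The main obstacle I anticipate is the bookkeeping in the index contraction that folds the $\ad_v\pi_\ell$ contributions into the clean constant-plus-linear form displayed above; both unimodularity hypotheses and the antisymmetry of $\pi^{jk}$ need to be combined carefully to produce the required cancellations.
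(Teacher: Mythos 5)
Your argument is correct, and it reaches the theorem by a genuinely different route from the paper's. The paper works coordinate--free with Weinstein's modular vector field: it defines $X_\mu$ by $d(i_\pi\mu)=i_{X_\mu}\mu$, shows $X_\mu(e)=\kappa$ (the modular character of $\G^*$), and uses Koszul's contraction formula to prove that $X_\mu-\kappa^+$ is a \emph{multiplicative} vector field, so that $X_\mu=0$ splits into $\kappa=0$ (condition~(1)) and the vanishing of the intrinsic derivative of the multiplicative part (condition~(2)). Your frame computation produces exactly the components of that same decomposition: in $d(i_\pi\mu)=\bigl(\tr\ad\nolimits^*_{e^i}+\tfrac12\pi^{jk}(a)c^i_{jk}\bigr)\nu_i$ the constant part is $\kappa$ and the $a$-dependent part is, up to left translation, the paper's multiplicative field $X_m$; where the paper invokes the rigidity of multiplicative vector fields ($X_m=0$ iff $d_eX_m=0$ on a connected group), you solve the linear ODE $X_u[\pi_\ell]=[\xi(u)]-\ad\nolimits_u[\pi_\ell]$ with $[\pi_\ell](e)=0$, which is the same fact in elementary clothing. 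I checked the steps you leave as ``should yield'': the contraction giving $\tr\ad\nolimits^*_{e^i}+\tfrac12\pi^{jk}c^i_{jk}$ (both uses of $\sum_j c^j_{jl}=0$ and the antisymmetry of $\pi^{jk}$ do combine as claimed), the $\ad$-equivariance of the bracket map $\wedge^2\G\to\G$ via Jacobi, and the identity $d(i_Xi_Y\mu)=i_{[X,Y]}\mu$ for left-invariant $X,Y$ on a unimodular group, which gives $\rho(i_\eta\mu_e)=\pm i_{[\eta]}\mu_e$; the ambient signs depend on the convention for $i_{X\wedge Y}$ but are irrelevant to the vanishing statements. What your version buys is self-containedness (no appeal to the modular class machinery) and a cleaner algebraic reading of condition~(2), namely that $\xi$ takes values in the kernel of the bracket map $\wedge^2\G\to\G$; what the paper's version buys is a basis-free formulation and the conceptual link to the modular vector field.
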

We will see ({\it cf.} Proposition \ref{cn}) that for a general connected Poisson-Lie group the condition $d\left(i_{\pi}\mu\right)=0$ implies \eqref{unimodular}.

If $G$ is abelian then $\rho=0$ and one can deduce easily from Theorems \ref{main1}-\ref{main2} the following result.
\begin{corollary}\label{corollary1}Let $(\G,\prs)$ be a  Lie algebra endowed with a scalar product and denote by $\pi_l$ the canonical linear Poisson structure on $\G^*$. Then $(\pi_\ell,\prs^*)$ satisfies Hawkins's conditions if and only if $(\G,\prs)$ is a Milnor Lie algebra.
\end{corollary}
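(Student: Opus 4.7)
The plan is to apply Theorems \ref{main1} and \ref{main2} directly to the abelian Lie group $G=\G^*$ equipped with the multiplicative Poisson tensor $\pi_\ell$ and the left-invariant Riemannian metric $\prs^*$. The crucial simplification is that $\G^*$ is abelian as a Lie group, so the bracket on its Lie algebra vanishes, and the dual cocycle $\rho$ appearing in the theorems (which is the transpose of the bracket on the Lie algebra of the group) vanishes identically.

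First I would identify the dual Lie bialgebra in the sense of the paper. Since the Lie algebra of $G=\G^*$ is $\G^*$ with zero bracket, and since the intrinsic derivative $\xi$ of $\pi_\ell$ at $0$ is, by the very construction of the linear Poisson structure, the transpose of the bracket on $\G$, the dual bialgebra $((\G^*)^*,[\,,\,]^*,\rho)$ is canonically identified with $(\G,[\,,\,],0)$, and the associated dual scalar product on $\G$ is $\prs$ itself. With $\rho=0$, condition (2) of Theorem \ref{main1} holds automatically and condition (1) becomes exactly the statement that $(\G,\prs)$ is a Milnor Lie algebra. Likewise in Theorem \ref{main2}, the group $G=\G^*$ is abelian hence unimodular, condition (2) is trivial because $\rho=0$, and condition (1) reduces to the unimodularity of $(\G,[\,,\,])$.

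The only remaining work, and the main obstacle, is to show that every Milnor Lie algebra is automatically unimodular, so that the unimodularity hypothesis inherited from Theorem \ref{main2} is redundant. I would argue using the orthogonal decomposition $\G=S\oplus[\G,\G]$ built into the definition. For $u\in S$, $\ad_u$ is skew-symmetric by definition and is therefore traceless. For $u\in[\G,\G]$, abelianness of $[\G,\G]$ gives $\ad_u([\G,\G])=0$, while $\ad_u(S)\subseteq[\G,\G]$ in all cases; hence in the block decomposition $\G=S\oplus[\G,\G]$ the operator $\ad_u$ is strictly block-triangular, so again $\tr\ad_u=0$. This yields unimodularity of $\G$ and closes the equivalence.
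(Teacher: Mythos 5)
Your proposal is correct and follows exactly the route the paper intends: specialize Theorems \ref{main1}--\ref{main2} to the abelian group $G=\G^*$, where the dual bialgebra is $(\G,[\,,\,],\rho=0)$ so that conditions \eqref{flat} and \eqref{unimodular} hold vacuously. Your explicit verification that every Milnor Lie algebra is unimodular (skew-symmetry of $\ad_u$ for $u\in S$, and strict block-triangularity of $\ad_u$ with respect to $\G=S\oplus[\G,\G]$ for $u\in[\G,\G]$) is exactly the detail hidden behind the paper's ``one can deduce easily,'' and it is needed to discharge condition (1) of Theorem \ref{main2}.
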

There are some interesting consequences of Theorems \ref{main1}-\ref{main2}:
\begin{enumerate}\item The classification of connected and simply connected Riemannian Poisson-Lie groups which are flat and metaflat is equivalent to the classification of the Lie bialgebra structures on Milnor Lie algebras for which (\ref{flat}) holds.
\item The classification of unimodular connected and simply connected Riemannian Poisson-Lie groups satisfying Hawkins's conditions  is equivalent to the classification of the Lie bialgebra structures on Milnor Lie algebras for which (\ref{flat}) and (\ref{unimodular}) hold.
        \item The Lie bialgebras structures on Milnor Lie algebras of dimension $\leq4$ can be computed  (see Section 4) and hence the Riemannian Poisson-Lie groups of dimension $\leq 4$ satisfying Hawkins's conditions  can be deduced (see Theorems \ref{dim3} and the paragraph devoted to the 4-dimensional case in Section 4).

\end{enumerate}

The paper is organized as follows. In Section $2$, we present a reformulation of a classical result of Milnor  and we recall some standard facts about Levi-Civita contravariant connections and about the metacurvature of flat and torsion-free contravariant connections. In section $3$, we prove Theorems \ref{main1}-\ref{main2} and finally, Section 4 is devoted to the determination of Riemannian Poisson-Lie groups satisfying Hawkins's conditions in dimension 2, 3 and 4.
\section{Preliminaries}

\subsection{Milnor Lie algebras}
The following lemma is interesting in itself:
\begin{lemma}\label{killing2}
Let $(G,\prs)$ be a Lie group with a left invariant Riemannian metric. If the sectional curvature of $\prs$ is nonpositive then the Lie subalgebra $S=\{u\in\G,\; \ad_u+\ad_u^t=0\}$ is abelian.
\end{lemma}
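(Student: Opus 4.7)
The plan is to use the Koszul/Milnor formula for the Levi-Civita connection of a left invariant metric restricted to the subalgebra $S$, where the formula simplifies dramatically, and then read off the sign of the sectional curvature on planes spanned by elements of $S$.

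First I would check that $S$ is actually a Lie subalgebra: if $u,v\in S$ then $\ad_{[u,v]}=[\ad_u,\ad_v]$, and using $\ad_u^t=-\ad_u$, $\ad_v^t=-\ad_v$, one computes $\ad_{[u,v]}^t=-\ad_{[u,v]}$, so $[u,v]\in S$. Next, I would recall that on a Lie group with left invariant metric, the Levi-Civita connection on left invariant vector fields is given by
\[
\nabla_u v=\tfrac{1}{2}\bigl([u,v]-\ad_u^t v-\ad_v^t u\bigr).
\]
Substituting $\ad_u^t=-\ad_u$, $\ad_v^t=-\ad_v$ for $u,v\in S$ and collecting terms yields simply $\nabla_u v=\tfrac{1}{2}[u,v]$.

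The key computation is then the curvature. For $u,v,w\in S$ we still have $[u,v]\in S$, so the formula $\nabla=\tfrac{1}{2}[\,,\,]$ applies to all compositions, giving
\[
R(u,v)w=\tfrac{1}{4}[u,[v,w]]-\tfrac{1}{4}[v,[u,w]]-\tfrac{1}{2}[[u,v],w].
\]
The Jacobi identity collapses the first two terms into $\tfrac{1}{4}[[u,v],w]$, whence
\[
R(u,v)w=-\tfrac{1}{4}\bigl[[u,v],w\bigr].
\]
Then the sectional numerator is
\[
\langle R(u,v)v,u\rangle=-\tfrac{1}{4}\bigl\langle[[u,v],v],u\bigr\rangle,
\]
and because $[u,v]\in S$, the operator $\ad_{[u,v]}$ is skew-symmetric, which lets me rewrite $\langle[[u,v],v],u\rangle=-\langle v,[[u,v],u]\rangle$; a second use of skew-symmetry of $\ad_v$ (since $v\in S$) converts this into $-\|[u,v]\|^2$. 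Putting it together,
\[
\langle R(u,v)v,u\rangle=\tfrac{1}{4}\|[u,v]\|^2\geq 0.
\]

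Finally I would conclude: the sectional curvature of every $2$-plane spanned by elements of $S$ is nonnegative. The nonpositivity hypothesis forces equality, hence $[u,v]=0$ for all $u,v\in S$, and $S$ is abelian. The only real subtlety, and thus the one deserving the most care in the write-up, is the double use of the skew-symmetry of $\ad_{[u,v]}$ and $\ad_v$ to turn the triple bracket into $\|[u,v]\|^2$; everything else is a mechanical application of the standard formula for $\nabla$ and of Jacobi.
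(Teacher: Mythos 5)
Your proof is correct, but it takes a genuinely different route from the paper's. The paper observes that each $u\in S$ gives a left invariant Killing field $u^+$ of constant length, invokes the Killing--field identity $\nabla_X\nabla_X u^+-\nabla_{\nabla_XX}u^+=R(u^+,X)X$ (Besse, Theorem 1.81) together with $\langle\nabla_X\nabla_Xu^+,u^+\rangle+\|\nabla_Xu^+\|^2=0$, and concludes from nonpositive curvature that $\nabla u^+=0$; abelianness of $S$ then follows since $[u^+,v^+]=\nabla_{u^+}v^+-\nabla_{v^+}u^+=0$ for parallel fields. You instead verify that $S$ is a subalgebra, note that the Koszul formula collapses to $\nabla_uv=\tfrac12[u,v]$ on $S$, and reproduce Milnor's bi-invariant curvature computation to get $\langle R(u,v)v,u\rangle=\tfrac14\|[u,v]\|^2\geq 0$ on planes spanned by elements of $S$, so nonpositivity forces $[u,v]=0$. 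Your argument is more elementary and self-contained (no external Killing-field lemma), and it only uses nonpositivity of the sectional curvatures of planes inside $S$; the paper's argument yields the stronger conclusion that every $u^+$, $u\in S$, is globally parallel, and tests curvature against arbitrary directions $X$. All the steps in your computation check out: $S$ is closed under brackets because $\ad_{[u,v]}^t=-[\ad_u,\ad_v]^t{}^{\,t}=-\ad_{[u,v]}$ when $\ad_u,\ad_v$ are skew; the Jacobi identity gives $R(u,v)w=-\tfrac14[[u,v],w]$; and the triple-bracket identity $\langle[[u,v],v],u\rangle=-\|[u,v]\|^2$ is right (though note it follows from a single application of skew-symmetry of $\ad_v$, namely $\langle -\ad_v[u,v],u\rangle=\langle[u,v],\ad_vu\rangle=-\|[u,v]\|^2$; your second step actually uses skew-symmetry of $\ad_u$, a harmless slip of wording).
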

\begin{proof}
For any $u\in\G$, we denote by $u^+$ the left invariant vector field associated to $u$. Remark that $S^+=\{u^+,\, u\in S\}$ is the Lie algebra of left invariant Killing vector fields. Now, since for any $u\in S$, $\langle u^+,u^+\rangle$ is constant then, for any left invariant vector field $X$ we have:
\begin{equation}\label{killing1}
\langle\nabla_X\nabla_X u^+,u^+\rangle+\langle\nabla_X u^+,\nabla_X u^+\rangle=0,
\end{equation}
where $\nabla$ is the Levi-Civita connection associated to $\prs$.\\
The vector field $u^+$ is Killing, thus we have the well-known formula (see \cite{besse}, Theorem $1.81$)
\begin{equation*}
\nabla_X\nabla_X u^+-\nabla_{\nabla_XX}u^+=R(u^+,X)X
\end{equation*}
where $R(X,Y)=\nabla_{[X,Y]}-[\nabla_X,\nabla_Y]$ is the tensor curvature. Moreover $\langle\nabla_{\nabla_XX}u^+,u^+\rangle=0$, hence the formula \eqref{killing1} becomes:
\begin{equation*}
\langle R(u^+,X)X,u^+\rangle+\langle\nabla_X u^+,\nabla_X u^+\rangle=0.
\end{equation*}
This implies, since the curvature is nonpositive, that $\langle\nabla_X u^+,\nabla_X u^+\rangle=0$. So $u^+$ is a parallel vector field and the lemma follows.
\end{proof}

The following proposition is a reformulation of a classical result of Milnor (see \cite{mi} Theorem 1.5).
\begin{proposition}\label{milnor} Let $(G,\prs)$ be a Lie group endowed with a left invariant Riemannian metric. Then the curvature of $\prs$ vanishes if and only if the Lie algebra $\G$ of $G$ endowed with the scalar product $\prs_e$ is a Milnor Lie algebra.\end{proposition}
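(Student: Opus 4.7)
My plan is to reduce to Milnor's Theorem 1.5 of \cite{mi}, according to which $\prs$ is flat if and only if $\G$ admits an orthogonal splitting $\G=\mathfrak{b}\oplus\mathfrak{u}$ with $\mathfrak{b}$ an abelian subalgebra, $\mathfrak{u}$ an abelian ideal, and $\ad_b$ skew-adjoint for every $b\in\mathfrak{b}$. The content of the proposition is to recognise that such a splitting can always be arranged to be the canonical one $\mathfrak{b}=S$, $\mathfrak{u}=[\G,\G]$. For the ``if'' direction I would simply set $\mathfrak{b}=S$ and $\mathfrak{u}=S^\perp=[\G,\G]$; the three Milnor Lie algebra axioms are exactly the hypotheses of Theorem~1.5, so flatness follows at once.

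For the ``only if'' direction, suppose $\prs$ is flat. By Lemma \ref{killing2}, $S$ is abelian, and Milnor's theorem provides an orthogonal splitting $\G=\mathfrak{b}\oplus\mathfrak{u}$ of the above type; from it, $[\G,\G]=[\mathfrak{b},\mathfrak{u}]\subseteq\mathfrak{u}$ is automatically abelian, so the remaining work is the identification $S^\perp=[\G,\G]$. For the inclusion $[\G,\G]\subseteq S^\perp$, I would substitute $Z=s^+$ with $s\in S$ into the Koszul formula for left invariant vector fields; the skewness of $\ad_s$ collapses two of the three non-derivative terms and yields $\langle[x,y],s\rangle=2\langle\na_{x^+}y^+,s^+\rangle$. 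The right-hand side vanishes because the proof of Lemma \ref{killing2} established that $s^+$ is parallel, so by compatibility of $\na$ with the metric $\langle\na_{x^+}y^+,s^+\rangle=x^+\langle y^+,s^+\rangle=0$.

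For the reverse inclusion $[\G,\G]^\perp\subseteq S$, I would decompose $x\in[\G,\G]^\perp$ as $x=b+u$ with $b\in\mathfrak{b}$, $u\in\mathfrak{u}$; since $\mathfrak{b}\perp\mathfrak{u}\supseteq[\G,\G]$, the $\mathfrak{u}$-component $u$ also satisfies $u\perp[\G,\G]$. The key observation is that $\ad_\mathfrak{b}$ restricted to $\mathfrak{u}$ is a family of commuting skew-symmetric operators whose total image equals $[\mathfrak{b},\mathfrak{u}]=[\G,\G]$; by the general fact that for such a family the common kernel is the orthogonal complement of the total image, the common kernel inside $\mathfrak{u}$ coincides with $\mathfrak{u}\cap[\G,\G]^\perp$. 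Hence $\ad_b u=0$ for every $b\in\mathfrak{b}$, so $u$ is central in $\G$ and in particular lies in $S$; combined with $b\in S$, this yields $x\in S$.

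The main obstacle is precisely this last step: Milnor's decomposition is not unique — central components of $\mathfrak{u}$ can always be shifted into $\mathfrak{b}$ — and the substantive observation is that any element of $\mathfrak{u}$ orthogonal to $[\G,\G]$ is automatically central, which is what pins down $S=[\G,\G]^\perp$ and completes the three Milnor Lie algebra conditions.
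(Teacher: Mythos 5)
Your proof is correct and follows essentially the same route as the paper: both directions reduce to Milnor's Theorem~1.5 together with Lemma~\ref{killing2}, with the substantive work being the identification $S^\perp=[\G,\G]$. Your two-inclusion argument (Koszul formula plus parallelism of $s^+$ for one inclusion, the kernel--image duality of the commuting skew-adjoint operators $\ad_b|_{\mathfrak u}$ for the other) is a more explicit rendering of the step the paper disposes of in one line from the relation $A_u=0$.
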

\begin{proof} Note first that the Levi-Civita connection of $\prs$ is entirely determined by the product $A:\G\times\G\too\G$ given by
\begin{equation}\label{levi}
2\langle A_uv,w\rangle_e=
\langle[u,v],w\rangle_e+\langle[w,u],v\rangle_e+\langle[w,v],u\rangle_e,\end{equation}
and the curvature vanishes if and only if, for any $u,v\in\G$, $A_{[u,v]}=[A_u,A_v]$.

If $\G=S\oplus[\G,\G]$ is a Milnor Lie algebra, then one can deduce easily from \eqref{levi} that
\begin{equation*}
A_u=\left\{\begin{array}{ll}
0 & \text{if}\ u\in[\G,\G] \\
\ad\nolimits_u & \text{if}\ u\in S,
\end{array}\right.
\end{equation*}and hence the curvature vanishes identically.

Suppose now that the curvature vanishes. In the proof of his result, Milnor considered $\mathfrak{u}=\{u\in\mathcal{G},\ A_u=0\}$ and showed that $\mathfrak{u}$ is an abelian ideal, its orthogonal
$\mathfrak{b}$ is an abelian subalgebra and for all $u\in\mathfrak{b}$, $\ad_u$ is skew-symmetric. Hence $\mathfrak{b}\subset S$ and $[\G,\G]=[\mathfrak{b},\mathfrak{u}]$.

Now, for any $u\in \mathfrak{u}$, $v\in \mathfrak{b}$ and  $w\in\G$, we have  $A_u=0$ and then
$$\langle w,[u,v]\rangle+\langle ad_wu,v\rangle+\langle u,ad_wv\rangle=0.$$This relation implies that $S=[\G,\G]^\perp$. We deduce that  $[\G,\G]\subset\mathfrak{u}$ and $[\G,\G]$ is abelian. From Lemma \ref{killing2}, $S$ is abelian which completes the proof. \end{proof}

\begin{proposition} Let $\G$ be a Milnor Lie algebra. If $\dim S\geq1$ then  the derived ideal $[\G,\G]$ is of even dimension.\end{proposition}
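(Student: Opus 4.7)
By the definition of a Milnor Lie algebra we have the orthogonal decomposition $\G=S\oplus[\G,\G]$ with both $S$ and $V:=[\G,\G]$ abelian, and for every $u\in S$ the operator $\ad_u$ is skew-symmetric for $\prs$. My plan is to study the family of restrictions $A_u:=\ad_u|_V\colon V\too V$ ($u\in S$) as a family of pairwise commuting skew-symmetric operators on the Euclidean space $V$, and then invoke the standard block form of such a family to conclude $\dim V$ is even.

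\textbf{Step 1 (preparatory computations).} First I would check that each $A_u$ is well-defined and skew-symmetric: $V$ is an ideal so $\ad_u(V)\subset V$, and skew-symmetry on $\G$ passes to the invariant subspace $V$. Next, decomposing any bracket $[x,y]$ according to $\G=S\oplus V$ and using $[S,S]=0$, $[V,V]=0$, one sees at once that
\begin{equation*}
V=[\G,\G]=\sum_{u\in S}\mathrm{Im}(A_u).
\end{equation*}
Finally, for $u,v\in S$ we have $[\ad_u,\ad_v]=\ad_{[u,v]}=0$ (since $S$ is abelian), so the family $\{A_u\}_{u\in S}$ is pairwise commuting.

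\textbf{Step 2 (common kernel vanishes).} Let $V_0=\bigcap_{u\in S}\ker A_u$. Skew-symmetry of $A_u$ gives $\ker A_u=\mathrm{Im}(A_u)^\perp$ (orthogonal taken inside $V$), hence
\begin{equation*}
V_0=\Bigl(\sum_{u\in S}\mathrm{Im}(A_u)\Bigr)^{\!\perp}=V^\perp=\{0\}
\end{equation*}
by Step 1. This is where the hypothesis $\dim S\geq 1$ plays its role: it is what makes the family $\{A_u\}_{u\in S}$ nontrivial and ensures the above spanning identity is genuine.

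\textbf{Step 3 (conclusion via block form).} Now I would apply the standard fact that a family of pairwise commuting real skew-symmetric operators on a Euclidean space admits a simultaneous orthogonal decomposition $V=V_0\oplus\bigoplus_i W_i$ into the joint kernel $V_0$ and two-dimensional common invariant subspaces $W_i$ on each of which every $A_u$ acts as a multiple of a fixed rotation. (One sees this by complexifying: the joint generalized eigenspaces $V_\chi\subset V\otimes\C$ are indexed by purely imaginary characters $\chi$, complex conjugation swaps $V_\chi$ with $V_{\bar\chi}$, and for $\chi\ne 0$ the real space underlying $V_\chi\oplus V_{\bar\chi}$ has even dimension; the $\chi=0$ piece is exactly $V_0\otimes\C$.) By Step~2 the part $V_0$ is absent, so $V$ is an orthogonal sum of $2$-dimensional subspaces and therefore $\dim[\G,\G]=\dim V$ is even. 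The only genuinely delicate point is the simultaneous block decomposition invoked in this last step; everything else is a direct unpacking of the Milnor axioms.
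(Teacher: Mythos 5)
Your proof is correct. It is built on the same basic object as the paper's proof --- the family $\{\ad_u|_{[\G,\G]}\}_{u\in S}$ of pairwise commuting skew-symmetric operators with trivial joint kernel --- but both halves of the argument are executed differently. For the evenness, the paper avoids any simultaneous normal form: it inducts on a basis $(s_1,\dots,s_p)$ of $S$, using only that a skew-symmetric operator restricted to an invariant subspace has even rank, so that each successive kernel $K_i=K_{i-1}\cap\ker\ad_{s_i}$ has even codimension in $[\G,\G]$; you instead invoke the simultaneous $2\times 2$ block decomposition of a commuting skew-symmetric family. For the vanishing of the joint kernel, the paper observes that $K_p$ centralizes both $S$ and the abelian ideal $[\G,\G]$, hence lies in the center of $\G$, hence in $S$, and concludes from $S\cap[\G,\G]=S\cap S^{\perp}=\{0\}$; you instead use $\ker A_u=\mathrm{Im}(A_u)^{\perp}$ together with the spanning identity $[\G,\G]=\sum_{u\in S}\mathrm{Im}(A_u)$. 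Your orthogonality argument is arguably more transparent about where the Milnor axioms enter, while the paper's induction is more elementary in that it needs the even-rank fact only one operator at a time; both are complete. (One small quibble: your remark that $\dim S\geq 1$ is ``where the hypothesis plays its role'' is not quite right --- if $S=\{0\}$ then $\G=[\G,\G]$ is abelian, forcing $[\G,\G]=\{0\}$, so the conclusion holds vacuously in any case.)
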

\begin{proof} Let $(s_1,...,s_p)$ be a basis of $S$. The restriction of $\ad_{s_1}$ to $[\G,\G]$ is a skew-symmetric endomorphism, thus its kernel $K_1$ is of even codimension in $[\G,\G]$. Now, $\ad_{s_2}$ commutes with $\ad_{s_1}$ and let invariant $K_1$,
 then $K_1\cap\ker\ad_{s_2}$ is of even codimension in $K_1$ and hence of even codimension in $[\G,\G]$. Thus, by induction, we show that $$K_p=[\G,\G]\cap\left(\cap_{i=1}^p\ker\ad\nolimits_{s_i}\right)$$ is an even codimensional subspace of $[\G,\G]$. Now from its definition  $K_p$ is contained in the center of $\G$ which is contained in $S$ and then $K_p=\{0\}$ and the result follows.
\end{proof}
\subsection{Contravariant connections and metacurvature}
Contravariant connections associated to a Poisson structure have
recently turned out to be useful in several areas of Poisson
geometry. Contravariant connections were defined by Vaisman \cite{vai}
and were analyzed in detail by Fernandes \cite{fer2}. This notion appears
extensively  in the context of noncommutative deformations (see
\cite{haw1,haw2}).

Let $(P,\pi)$ be a Poisson manifold. We consider
$\an:T^*P\longrightarrow TP$  the anchor map given by
$\beta(\an(\alpha))=\pi(\alpha,\beta),$ and we denote by $[\;,\;]_\pi$
the Koszul bracket on differential $1$-forms  given by
\begin{equation}\label{koszul.bracket}
[\alpha,\beta]_\pi=
 \mathscr{L}_{\pi_{\#}(\alpha)}\beta-\mathscr{L}_{\pi_{\#}(\beta)}\alpha-d(\pi(\alpha,\beta)).
\end{equation}
This bracket can be extended naturally to $\Om^*(P)$ and gives rise to a bracket which we denote also by $[\;,\;]_\pi.$

A contravariant connection on $P$, with respect to $\pi$, is a $\mathbb{R}$-bilinear map
\begin{equation*}
\begin{array}{cccc}
\mathcal{D} : &\Omega^1(P)\times\Omega^1(P)&\longrightarrow&\Omega^1(P) \\
   &(\alpha,\beta)&\longmapsto&\mathcal{D}_\alpha\beta
\end{array}
\end{equation*}
satisfying the following properties:
\begin{enumerate}
\item $\alpha\mapsto\mathcal{D}_\alpha\beta$ is $\mathscr{C}^\infty(P)$-linear, that is:
$$\mathcal{D}_{f\alpha}\beta=f\mathcal{D}_\alpha\beta,\ \text{for all}\, f\in\mathscr{C}^\infty(P).$$
\item $\beta\mapsto\mathcal{D}_\alpha\beta$ is a derivation, in the sense:
$$\mathcal{D}_\alpha\left(f\beta\right)=f\mathcal{D}_\alpha\beta+\an(\alpha)(f)\beta,\ \text{for all}\, f\in\mathscr{C}^\infty(P).$$
\end{enumerate}
The torsion and the curvature of a contravariant connection ${\cal D}$ is formally
identical to the usual definitions
$$T(\alpha,\beta)={\cal D}_\alpha\beta-{\cal D}_\beta\alpha-[\alpha,\beta]_\pi\quad\mbox{and}\quad
K(\alpha,\beta)={\cal D}_\alpha{\cal D}_\beta-{\cal D}_\beta{\cal D}_\alpha-{\cal D}_{[\alpha,\beta]_\pi}.$$
The connection ${\cal D}$ is called \emph{flat} if $K$ vanishes identically.\\
Let us define now an interesting  class of contravariant connections, namely Levi-Civita contravariant connections.\\
Let $(P,\pi)$ be a Poisson manifold and $\langle\;,\;\rangle$ a pseudo-Riemannian scalar product
 on $T^*P$.
The metric contravariant connection associated to $(\pi,\langle\;,\;\rangle)$
is the unique contravariant connection ${\cal D}$ such that ${\cal D}$ is torsion-free and the  metric $\langle\;,\;\rangle$ is parallel with respect
to ${\cal D}$, i.e.,
$$\pi_{\#}(\alpha).\langle\beta,\gamma\rangle=\langle{\cal D}_\alpha\beta,\gamma\rangle+\langle\beta,{\cal D}_\alpha\gamma\rangle.$$
 The connection ${\cal D}$ is the contravariant analogue of the Levi-Civita connection and can be defined by the Koszul formula:
\begin{eqnarray}
2\langle{\cal D}_\alpha\beta,\gamma\rangle&=&\pi_{\#}(\alpha).\langle\beta,\gamma\rangle+
\pi_{\#}(\beta).\langle\alpha,\gamma\rangle-
\pi_{\#}(\gamma).\langle\alpha,\beta\rangle\nonumber\\
&+&\langle[\gamma,\alpha]_\pi,\beta\rangle+\langle[\gamma,\beta]_\pi,\alpha\rangle+
\langle[\alpha,\beta]_\pi,\gamma\rangle.
\label{koszul.connection}\end{eqnarray}
We call $\D$ the \emph{Levi-Civita contravariant connection} associated to $(\pi,\prs)$.
\paragraph{The metacurvature}
We recall now the definition of the metacurvature
introduced by Hawkins in \cite{haw2}.

Let $(P,\pi)$ be a Poisson manifold and $\D$ a torsion-free and
flat contravariant connection with respect to $\pi$. In \cite{haw2},
Hawkins showed that such a connection defines  a bracket
$\{\;,\;\}$ on the space of differential forms $\Om^*(P)$ such
that:
\begin{enumerate} \item $\{\;,\;\}$ is $\mathbb{R}$-bilinear, degree 0
and antisymmetric, i.e.,
$$\{\sigma,\rho\}=-(-1)^{deg\sigma deg\rho}\{\rho,\sigma\}.
$$
\item The differential $d$ is a derivation with respect to
$\{\;,\;\}$, i.e.,
$$d\{\sigma,\rho\}=\{d\sigma,\rho\}+(-1)^{deg\sigma}\{\sigma,d\rho\}.
$$ \item $\{\;,\;\}$ satisfies the product rule
$$\{\sigma,\rho\wedge\la\}=\{\sigma,\rho\}\wedge\la+(-1)^{deg\sigma
deg\rho}\rho\wedge\{\sigma,\la\}.$$ \item For any $f,g\in\mathscr{C}^\infty(P)$ and for any $\sigma\in\Om^*(P)$ the bracket $\{f,g\}$
coincides with the initial Poisson bracket and
$$\{f,\sigma\}=\D_{df}\sigma.$$\end{enumerate}
Hawkins called this bracket a \emph{generalized Poisson bracket} and
showed that there exists a $(2,3)$-tensor $\M$ (symmetric in the contravariant indices and
antisymmetric in the covariant indices) such that the
following assertions are equivalent:\begin{enumerate}\item The
generalized Poisson bracket satisfies the graded Jacobi identity
$$\{\{\sigma,\rho\},\la\}=\{\sigma,\{\rho,\la\}\}
-(-1)^{deg\sigma deg\rho}\{\rho,\{\sigma,\la\}\}.
$$\item The tensor $\M$ vanishes identically.\end{enumerate}
$\M$ is called the \emph{metacurvature} and is given by
\begin{equation}\label{metacourbure}\M(df,\al,\be)=\{f,\{\al,\be\}\}-\{\{f,\al\},\be\}-
\{\{f,\be\},\al\}.\end{equation}
The connection ${\cal D}$ is called \emph{metaflat} if ${\cal M}$ vanishes identically.\\

The following formulas, due to Hawkins, will be useful later. Indeed,
Hawkins pointed out in \cite{haw2} pp. 394, that for any parallel 1-form
$\al$ with respect to $\D$ and any 1-form $\be$, the generalized Poisson bracket of
$\al$ and $\be$ is given by
\begin{equation}\label{eq7}\{\alpha,\beta\}=-{\cal D}_{\beta}d\alpha.\end{equation}Thus, one can deduce from
(\ref{metacourbure}) that for any parallel 1-forms $\alpha,\gamma$ and for any 1-form
$\beta$,
\begin{equation}\label{metacourbure2}{\cal M}(\alpha,\beta,\gamma)=-{\cal D}_\beta{\cal D}_\gamma d\alpha.\end{equation}

To finish this section, we give a useful full global formula
for Hawkin's generalized Poisson bracket of two 1-forms. Let $\al$
and $\be$ be two 1-forms on a Poisson  manifold $P$ endowed with a
 torsion-free and flat contravariant connection $\D$. One can suppose
  that $\be=gdf$ where $f,g\in\mathscr{C}^\infty(P)$. Then, we
 have
 \begin{align*}
\{\alpha,fdg\}=&\{\alpha,f\}\wedge dg+f\{\alpha,dg\}\\
=&-\D_{df}\alpha\wedge dg+f\left(d\D_{dg}\alpha-\D_{dg}d\alpha\right)\\
=&-\D_{fdg}d\alpha+d\D_{fdg}\alpha-\D_{df}\alpha\wedge dg-df\wedge\D_{dg}\alpha\\
=&-\D_{fdg}d\alpha+d\D_{fdg}\alpha-\D_\alpha\left(df\wedge dg\right)-[df,\alpha]_\pi\wedge dg-df\wedge[dg,\alpha]_\pi\\
=&-\D_{fdg}d\alpha-\D_\alpha\left(d(fdg)\right)+d\D_{fdg}\alpha-[df,\alpha]_\pi\wedge dg-df\wedge[dg,\alpha]_\pi\\
=&-\D_{fdg}d\alpha-\D_\alpha\left(d(fdg)\right)+d\D_{fdg}\alpha+[\alpha,d(fdg)]_\pi\\
=&-\D_\alpha d\beta-\D_\beta d\alpha+d\D_\beta\alpha+[\alpha,d\beta]_\pi.
\end{align*}
Thus, for any $\al,\be\in\Om^1(P)$, we have
\begin{equation}\{\alpha,\beta\}=-{\cal D}_\alpha d\beta-{\cal D}_\beta d\alpha+d{\cal D}_\beta\alpha+[\alpha,d\beta]_\pi.\label{eq6}\end{equation}

\section{Proofs of Theorems \ref{main1}-\ref{main2}}
\subsection{Proof of Theorem \ref{main1}}
Theorem \ref{main1} is an immediate consequence of the following result.
\begin{theorem}Let $(G,\pi,\langle\;,\;\rangle)$ be a Riemannian Poisson-Lie group. Then:
\begin{enumerate}\item $(\pi,\prs)$ is flat if and only if the dual Lie algebra $(\G^*,\prs^*)$ is a Milnor Lie algebra.
\item If $(\pi,\prs)$ is flat then, if one identify $\G^*$ with the space of left invariant 1-forms, the metacurvature  $\M$ is given by
    \begin{equation}\label{metacourbure3}
\mathcal{M}\left(\alpha,\beta,\gamma\right)=\left\{\begin{array}{ll}
\ad\nolimits_\alpha\ad\nolimits_\beta\rho(\gamma) & \text{for all}\ \alpha,\beta,\gamma\in S, \\
\qquad 0 & \text{otherwise},
\end{array}\right.
\end{equation}where $S=\{\alpha\in\mathcal{G}^*,\ \ad_\alpha+\ad_\alpha^t=0\}$ and $\rho:\G^*\too\G^*\wedge\G^*$ is the dual $1$-cocycle.
\end{enumerate}
\end{theorem}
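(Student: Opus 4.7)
The plan is to reduce both parts to Lie-algebraic statements about $(\G^*,[\,,\,]^*,\prs^*)$ by exploiting the identification of $\G^*$ with left-invariant $1$-forms on $G$ and the tensorial nature of $K$ and $\M$.

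The first step I would establish is the following key observation: for left-invariant $1$-forms $\alpha,\beta\in\G^*$, the form $\D_\alpha\beta$ is again left-invariant and the restriction of $\D$ to $\G^*$ coincides with the Levi-Civita product $A$ of the Lie algebra $(\G^*,[\,,\,]^*,\prs^*)$ given by \eqref{levi}. To prove this, I would evaluate the Koszul formula \eqref{koszul.connection} on left-invariant arguments: the three $\an(\cdot).\langle\cdot,\cdot\rangle$ terms vanish because $\langle\alpha,\beta\rangle$ is constant when $\alpha,\beta$ are left-invariant, and the remaining three terms reproduce \eqref{levi} once one knows the Koszul bracket of two left-invariant $1$-forms is left-invariant and equals the dual Lie bracket $[\alpha,\beta]^*$. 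This last fact is a standard consequence of multiplicativity ($\pi(e)=0$ and the cocycle property of $\xi$), so I would recall it briefly rather than reprove it.

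For part $(1)$, I would use that the curvature $K$ is $\mathscr{C}^\infty(G)$-trilinear (a direct calculation using the Leibniz rule for $\D$ and the identity $\an([\alpha,\beta]_\pi)=[\an(\alpha),\an(\beta)]$). Thus $K\equiv 0$ if and only if it vanishes on every triple of left-invariant arguments; by the previous paragraph this restriction is precisely the Lie-algebra curvature of the Levi-Civita product of $(\G^*,\prs^*)$, and Proposition \ref{milnor} then supplies the equivalence with $(\G^*,\prs^*)$ being a Milnor Lie algebra.

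For part $(2)$, assuming $(\G^*,\prs^*)$ Milnor, I would use the explicit form of $A$ extracted from the proof of Proposition \ref{milnor}: $A_\alpha=\ad_\alpha$ on $S$ and $A_\alpha=0$ on $S^\perp=[\G^*,\G^*]$. It follows that every $\alpha\in S$ is $\D$-parallel (for $\delta\in S^\perp$ we have $A_\delta=0$, while for $\delta\in S$ the element $\ad_\delta\alpha=[\delta,\alpha]^*$ vanishes because $S$ is abelian). For $\alpha,\beta,\gamma\in S$, applying \eqref{metacourbure2} and using $d\alpha=-\rho(\alpha)$, the fact that $\D_\beta,\D_\gamma$ act as $\ad_\beta,\ad_\gamma$ by derivation on $\Lambda^2\G^*$, and the abelianness of $S$ (together with the symmetry of $\M$ in its outer arguments) yields $\M(\alpha,\beta,\gamma)=\ad_\alpha\ad_\beta\rho(\gamma)$ exactly as claimed. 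By $\mathscr{C}^\infty$-linearity of $\M$ it then suffices to check the vanishing of $\M$ when one of $\alpha,\beta,\gamma$ lies in $S^\perp$.

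The main obstacle I anticipate is precisely this \emph{otherwise} case. When $\beta\in S^\perp$ with $\alpha,\gamma\in S$ parallel, \eqref{metacourbure2} disposes of it immediately since $\D_\beta$ annihilates all left-invariant forms. The genuinely delicate case is $\alpha\in S^\perp$ (equivalently, by symmetry, $\gamma\in S^\perp$), because such an $\alpha$ is typically \emph{not} $\D$-parallel: the elements $\ad_\delta\alpha$ for $\delta\in S$ need not vanish, so \eqref{metacourbure2} does not apply. To handle it I would fall back on the global formula \eqref{eq6} for the generalized bracket, substitute it into the defining relation \eqref{metacourbure}, and reduce each resulting term to expressions in $A$, $\rho$ and the Koszul bracket on $\G^*$; the cancellations should then be driven by the two abelianness conditions ($S$ abelian and $[\G^*,\G^*]$ abelian) built into the Milnor structure, together with $\rho$ being a $1$-cocycle. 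This bookkeeping, rather than any conceptual difficulty, is where the real work lies.
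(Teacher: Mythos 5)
Your reduction to the Lie algebra level, your treatment of part (1), and your handling of the cases where all arguments lie in $S$ or where only the middle (contravariant) argument lies in $S^\perp=[\G^*,\G^*]$ all match the paper's argument. The gap is in what you call the ``genuinely delicate case'': you do not actually prove that $\M(\alpha,\beta,\gamma)$ vanishes when the covariant argument $\alpha$ lies in $[\G^*,\G^*]$, you only announce a plan (substitute \eqref{eq6} into \eqref{metacourbure} and hope the abelianness conditions and the cocycle identity for $\rho$ force all terms to cancel). This is precisely where the bulk of the paper's proof lives, and it is not mere bookkeeping. For $\alpha,\beta\in[\G^*,\G^*]$ the paper does not get the vanishing by term-by-term cancellation at all: it first establishes, through a page-long computation with $\alpha=\sum f_i\,dg_i$, the non-obvious identity $\M(\alpha,\beta,\beta)=-[\beta,[\beta,d\alpha]_\pi]_\pi$, and only then concludes because the right-hand side dies when $[\G^*,\G^*]$ is abelian. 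Identifying the iterated Koszul bracket as the quantity to compare with is the conceptual step your sketch is missing; without it (or a genuinely completed direct computation), the hardest case of the theorem remains unproved. Note also that the intermediate case $\alpha,\beta\in[\G^*,\G^*]$, $\gamma\in S$ requires its own argument in the paper (using $\{\beta,\gamma\}=0$ and $\D_{\D_\alpha\beta}\gamma=0$), which your sketch also subsumes into the unexecuted plan.

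A secondary point: you invoke ``the symmetry of $\M$ in its outer arguments'' to identify the case $\gamma\in S^\perp$ with the case $\alpha\in S^\perp$. The symmetry Hawkins provides is in the two \emph{contravariant} slots, i.e.\ the second and third arguments of \eqref{metacourbure}; the first (covariant) slot plays a different role, as is visible in \eqref{metacourbure2}, where $d$ is applied to $\alpha$ but not to $\beta$ or $\gamma$. Swapping $\gamma\in S^\perp$ into the middle slot in fact reduces that case to one you have already handled, so the correct symmetry makes your life easier, but the symmetry you actually appeal to is not available and cannot be used to trade the covariant slot for a contravariant one.
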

\begin{proof} Note first that in a Poisson-Lie group the Koszul bracket of two left invariant 1-form is a left invariant 1-form (see \cite{wei}) and, if one identifies $\G^*$ with the space of left invariant 1-forms, the Koszul bracket coincides with the Lie bracket of $\G^*$. Through this proof, we identify $\G^*$ with the space of left invariant 1-forms on $G$.

\begin{enumerate}\item  Denote by $\prs^*$ the left invariant metric on $T^*G$ associated to $\prs$ and denote by $\D$ the Levi-Civita contravariant connection associated to $(\pi,\prs^*)$. Since the Riemannian metric is left invariant, for any $\al,\be,\ga\in\G^*$, \eqref{koszul.connection} becomes
\begin{eqnarray}
2\langle{\cal D}_\alpha\beta,\gamma\rangle^*=
\langle[\gamma,\alpha]_\pi,\beta\rangle^*+\langle[\gamma,\beta]_\pi,\alpha\rangle^*+
\langle[\alpha,\beta]_\pi,\gamma\rangle^*.
\label{koszul.connection1}\end{eqnarray}Hence the restriction of $\D$ to $\G^*\times\G^*$ defines a product on $\G^*$. The vanishing of the curvature of $\D$ is equivalent to the vanishing of the restriction of the curvature of $\D$ to $\G^*$. Now, one can deduce from \eqref{koszul.connection1} that the vanishing of the restriction of the curvature of $\D$ to $\G^*$ is equivalent to the flatness of the left invariant Riemannian metric associated to $\prs^*_e$ on any Lie group with $\G^*$ as a Lie algebra and one can conclude by using Proposition \ref{milnor}.

\item Suppose now that $(\pi,\prs)$ is flat and, according to the first part,  let $\G^*=S\stackrel{\perp}\oplus [\G^*,\G^*]$ where $S=\{\al\in\G^*,ad_\al+ad_\al^t=0\}$ and both $S$ and $[\G^*,\G^*]$ are abelian. Let us establish \eqref{metacourbure3}.

    First, one can deduce from \eqref{koszul.connection1} that, for any $\ga\in\G^*$,
\begin{equation}\label{levi-civita}
\D_\al\ga=\left\{\begin{array}{ll}
0 & \text{if}\ \al\in [\G^*,\G^*] \\
\;[\al,\ga]_\pi=\ad\nolimits_\al\ga & \text{if}\ \al\in S,
\end{array}\right.
\end{equation}and moreover, for any $\al\in S$, $\D\al=0.$

\begin{enumerate}\item If $\al,\be,\ga\in S$, since $\D\al=\D\be=\D\ga=0$, we deduce from \eqref{metacourbure2} that
$$\M(\al,\be,\ga)=-\D_\al\D_\be d\ga\stackrel{\eqref{levi-civita}}=ad_\al ad_\be\rho(\ga).$$
\item If $\al,\ga\in S$ and $\be\in [\G^*,\G^*]$, since $\D\al=\D\ga=0$, we deduce from \eqref{metacourbure2} that
$$\M(\al,\be,\ga)=-\D_\al\D_\be d\ga\stackrel{\eqref{levi-civita}}=0.$$
\item If $\alpha,\beta\in [\G^*,\G^*]$ and $\gamma\in S$. At least locally, we have $\alpha=\sum f_idg_i$ and we deduce from \eqref{metacourbure} that
\begin{equation*}
\mathcal{M}(\alpha,\beta,\gamma)=\sum f_i\{g_i,\{\beta,\gamma\}\}-f_i\{\{g_i,\beta\},\gamma\}-f_i\{\{g_i,\gamma\},\beta\}.
\end{equation*}
From \eqref{eq7}, we have $\{\beta,\gamma\}=-\mathcal{D}_\gamma d\beta=0$, and from  \eqref{levi-civita}, $\{g_i,\gamma\}=\D_{dg_i}\ga=0$, thus
\begin{equation*}
\mathcal{M}(\alpha,\beta,\gamma)=\sum-f_i\{\{g_i,\beta\},\gamma\}=-\sum f_i\mathcal{D}_{\mathcal{D}_{dg_i}\beta}\gamma=
-\mathcal{D}_{\mathcal{D}_\alpha\beta}\gamma=0.
\end{equation*}

\item For $\alpha,\beta\in [\G^*,\G^*]$, the computation of $\M(\al,\be,\be)$ is more difficult. First, by comparing $\M(\al,\be,\be)$ and $[\be,[\be,d\al]_\pi]_\pi$, we will show that they agree up to sign and, next, we will show that $[\be,[\be,d\al]_\pi]_\pi=0$ and we get the result.

Put $\al=\sum f_idg_i$. By using \eqref{metacourbure}, we get
\begin{align*}
\mathcal{M}(\alpha,\beta,\beta)=&\sum f_i\{g_i,\{\alpha,\beta\}\}-2f_i\{\{g_i,\beta\},\beta\}\\
=&\sum f_i\D_{dg_i}\{\alpha,\beta\}-2\sum f_i\{\mathcal{D}_{dg_i}\beta,\beta\}\\
=&\D_\al\{\al,\be\}-2\sum f_i\{\mathcal{D}_{dg_i}\beta,\beta\}\\
\stackrel{(*)}=&-2\sum f_i\{\mathcal{D}_{dg_i}\beta,\beta\}\\
=&-2\sum \left(\{f_i\mathcal{D}_{dg_i}\beta,\beta\}+\mathcal{D}_{df_i}\beta\wedge\mathcal{D}_{dg_i}\beta\right)\\
=&-2\{\mathcal{D}_{\al}\beta,\beta\}-2\sum\mathcal{D}_{df_i}\beta\wedge\mathcal{D}_{dg_i}\beta\\
=&-2\sum\mathcal{D}_{df_i}\beta\wedge\mathcal{D}_{dg_i}\beta.
\end{align*}In $(*)$ we have used \eqref{levi-civita} and the fact that $\{\al,\be\}\in\wedge^2\G^*$ which can be deduced from \eqref{eq6}.
On the other hand,
\begin{align*}
\;[\beta,[\beta,d\alpha]_\pi]_\pi=&\sum[\beta,[\beta,df_i\wedge dg_i]_\pi]_\pi\\
=&\sum[\beta,[\beta,df_i]_\pi\wedge dg_i]_\pi+[\beta,df_i\wedge[\beta,dg_i]_\pi]_\pi\\
=&\sum[\beta,[\beta,df_i]_\pi]_\pi\wedge dg_i+[\beta,df_i]_\pi\wedge[\beta,dg_i]_\pi\\
&+[\beta,df_i]_\pi\wedge[\beta,dg_i]_\pi+df_i\wedge[\beta,[\beta,dg_i]_\pi]_\pi.
\end{align*}
 Now, choose an orthonormal basis $\{\alpha_1,...,\alpha_n\}$ of $\G^*$. For any $\ga\in\Om^1(G)$, we have
$\ga=\sum\langle \ga,\alpha_i\rangle^*\alpha_i$, and
\begin{align*}
[\beta,\ga]_\pi=&\sum\pi_\sharp(\beta)\cdot\langle \ga,\alpha_i\rangle^*\,\alpha_i+
\langle \ga,\alpha_i\rangle^*[\beta,\alpha_i]_\pi\\
=&\mathcal{D}_\beta \ga+\sum\langle \ga,\alpha_i\rangle^*[\beta,\alpha_i]_\pi.
\end{align*}
Hence
\begin{align*}
\;[\beta,[\beta,\ga]_\pi]_\pi=&[\beta,\mathcal{D}_\beta \ga]_\pi+\sum\pi_\sharp(\beta)\cdot\langle \ga,
\alpha_i\rangle^*[\beta,\alpha_i]_\pi+\sum\langle \ga,
\alpha_i\rangle^*[\be,[\beta,\alpha_i]_\pi]_\pi\\
=&[\beta,\mathcal{D}_\beta \ga]_\pi+\sum\langle\mathcal{D}_\be \ga,
\alpha_i\rangle^*[\beta,\alpha_i]_\pi\\
=&2[\beta,\mathcal{D}_\beta \ga]_\pi-\mathcal{D}_\beta\mathcal{D}_\beta \ga\\
=&\mathcal{D}_\beta\mathcal{D}_\beta \ga-2\mathcal{D}_{(\mathcal{D}_\beta \ga)}\beta\\
=&\mathcal{D}_\beta\mathcal{D}_\beta \ga-2\mathcal{D}_{[\beta,\ga]_\pi}\beta-\sum\langle \ga,\alpha_i\rangle^*\D_{[\beta,\alpha_i]_\pi}\be\\
=&\mathcal{D}_\beta\mathcal{D}_\beta \ga-2\mathcal{D}_{[\beta,\ga]_\pi}\beta\\
=&\mathcal{D}_\beta\mathcal{D}_\beta df-2(K(\be,\ga)\be+\mathcal{D}_\beta\mathcal{D}_{\ga}\beta-\D_{\ga}\D_\be\be)\\
=&\mathcal{D}_\beta\mathcal{D}_\beta \ga-2\mathcal{D}_\beta\mathcal{D}_{\ga}\beta.
\end{align*}
By using this formula, we get
\begin{align*}
[\beta,[\beta,df_i]_\pi]_\pi\wedge dg_i=&\mathcal{D}_\beta\mathcal{D}_\beta df_i\wedge dg_i-2\mathcal{D}_\beta\mathcal{D}_{df_i}\beta\wedge dg_i\\
=&\mathcal{D}_\beta\left(\mathcal{D}_\beta df_i\wedge dg_i\right)-\mathcal{D}_\beta df_i\wedge\mathcal{D}_\beta dg_i\\ &-2\mathcal{D}_\beta\left(D_{df_i}\beta\wedge dg_i\right)+2D_{df_i}\beta\wedge\mathcal{D}_\beta dg_i,\\
df_i\wedge[\beta,[\beta,dg_i]_\pi]_\pi=&-\mathcal{D}_\beta\left(\mathcal{D}_\beta dg_i\wedge df_i\right)+\mathcal{D}_\beta dg_i\wedge\mathcal{D}_\beta df_i\\ &+2\mathcal{D}_\beta\left(D_{dg_i}\beta\wedge df_i\right)-2D_{dg_i}\beta\wedge\mathcal{D}_\beta df_i.\\
\end{align*} On the other hand
\begin{align*}
2[\beta,df_i]\wedge[\beta,dg_i]=&2\mathcal{D}_\beta df_i\wedge\mathcal{D}_\beta dg_i-2\mathcal{D}_\beta df_i\wedge\mathcal{D}_{dg_i}\beta\\&-2\mathcal{D}_{df_i}\beta\wedge\mathcal{D}_\beta dg_i+2\mathcal{D}_{df_i}\beta\wedge\mathcal{D}_{dg_i}\beta.
\end{align*}
Thus
\begin{align*}
\;[\beta,[\beta,d\alpha]_\pi]_\pi=&\mathcal{D}_\beta\mathcal{D}_\beta d\alpha+2\sum\mathcal{D}_{df_i}\beta\wedge\mathcal{D}_{dg_i}\beta-
2\mathcal{D}_\beta\left(\mathcal{D}_{df_i}\beta\wedge dg_i\right)\\&-2\mathcal{D}_\beta\left(df_i\wedge\mathcal{D}_{dg_i}\beta\right)\\
=&\mathcal{D}_\beta\mathcal{D}_\beta d\alpha+2\sum\mathcal{D}_{df_i}\beta\wedge\mathcal{D}_{dg_i}\beta+
2\mathcal{D}_\beta\left([\beta,df_i]_\pi\wedge dg_i\right)\\&-2\mathcal{D}_\beta\left(\mathcal{D}_\beta df_i\wedge dg_i\right)
+2\mathcal{D}_\beta\left(df_i\wedge[\beta,dg_i]_\pi\right)-
2\mathcal{D}_\beta\left(df_i\wedge\mathcal{D}_\beta dg_i\right)\\
=&-\mathcal{D}_\beta\mathcal{D}_\beta d\alpha-\mathcal{M}(\alpha,\beta,\beta)+2\mathcal{D}_\beta[\beta,d\alpha]_\pi\\
=&-\mathcal{M}(\alpha,\beta,\beta).
\end{align*}
Now, since $[\G^*,\G^*]$ is abelian and $\beta\in[\G^*,\G^*]$, then $[\beta,[\beta,d\alpha]_\pi]_\pi=0$. This completes the proof. \end{enumerate}

\end{enumerate}\end{proof}

Before  giving a proof for Theorem \ref{main2}, let us show first that, in the general case, the condition \eqref{unimodular} is a necessary condition.
\begin{proposition}\label{cn} Let $(G,\pi)$ be a  Poisson-Lie group and let $\mu$ be a left invariant volume form on $G$. If $d(i_{\pi}\mu)=0$ then \eqref{unimodular} holds.
\end{proposition}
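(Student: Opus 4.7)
The plan is to pull the identity $d(i_\pi\mu)=0$ back to $\wedge^{n-1}\G^*$ via the left-invariant trivialization (with $n=\dim G$) and then read off consequences at the identity. Fix a basis $\{e_j\}$ of $\G$ with dual basis $\{\theta^j\}$ of $\G^*$ so that $\mu_e=\theta^1\wedge\cdots\wedge\theta^n$. Since $\pi$ is multiplicative and $\mu$ is left invariant, $(i_\pi\mu)_g=L_{g^{-1}}^*(i_{\pi_\ell(g)}\mu_e)$. Expanding $d$ in the left-invariant coframe (where $d\tilde\theta^I=-\widetilde{\rho(\theta^I)}$) and applying $L_g^*$ gives
\[
L_g^*\bigl(d(i_\pi\mu)_g\bigr)=\sum_j \theta^j\wedge i_{(e_j^+\pi_\ell)(g)}\mu_e - \rho\bigl(i_{\pi_\ell(g)}\mu_e\bigr).
\]
The cocycle identity $\pi_\ell(gh)=\pi_\ell(h)+\mathrm{Ad}_{h^{-1}}\pi_\ell(g)$ (equivalent to multiplicativity of $\pi$) yields $(e_j^+\pi_\ell)(g)=\xi(e_j)-\ad_{e_j}\pi_\ell(g)$, so
\[
L_g^*\bigl(d(i_\pi\mu)_g\bigr)=A-B(g)-\rho\bigl(i_{\pi_\ell(g)}\mu_e\bigr),
\]
with $A:=\sum_j\theta^j\wedge i_{\xi(e_j)}\mu_e$ and $B(g):=\sum_j\theta^j\wedge i_{\ad_{e_j}\pi_\ell(g)}\mu_e$.

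Evaluating at $g=e$ (where $\pi_\ell(e)=0$) forces $A=0$. Differentiating in direction $u\in\G$ at $e$ and using $d_e\pi_\ell=\xi$ then yields the first-order identity
\[
\sum_j \theta^j\wedge i_{\ad_{e_j}\xi(u)}\mu_e + \rho\bigl(i_{\xi(u)}\mu_e\bigr)=0 \qquad(\star)
\]
for every $u\in\G$. So \eqref{unimodular} will follow from $(\star)$ as soon as the first summand vanishes, and this cancellation is the heart of the argument and is what I expect to be the main obstacle.

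The cancellation uses both $A=0$ and the 1-cocycle property of $\xi$. Writing the cocycle identity as $\ad_{e_j}\xi(u)=\xi([e_j,u])+\ad_u\xi(e_j)$ splits the first summand of $(\star)$ into two pieces. For the $\ad_u\xi(e_j)$ piece one applies the identity $i_{\ad_u W}\mu_e=\ad^*_u(i_W\mu_e)+\tr(\ad_u)\,i_W\mu_e$ (which follows from $\ad^*_u\mu_e=-\tr(\ad_u)\mu_e$); the trace correction contributes $\tr(\ad_u)A=0$, while the $\sum_j\theta^j\wedge\ad^*_u(i_{\xi(e_j)}\mu_e)$ contribution is handled by applying the degree-zero derivation $\ad^*_u$ to the identity $A=0$. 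This gives $\sum_j\theta^j\wedge\ad^*_u(i_{\xi(e_j)}\mu_e)=-\sum_j\ad^*_u\theta^j\wedge i_{\xi(e_j)}\mu_e$; expanding $\ad^*_u\theta^j=-\sum_k(\ad_u)^j_k\theta^k$ and using linearity of $\xi$, this equals $\sum_k\theta^k\wedge i_{\xi([u,e_k])}\mu_e$. Since $[e_j,u]=-[u,e_j]$, this is the negative of the $\xi([e_j,u])$ piece, so the first summand of $(\star)$ vanishes; substituting into $(\star)$ gives $\rho(i_{\xi(u)}\mu_e)=0$, which is \eqref{unimodular}. Philosophically, the $A=0$ condition (equivalently, unimodularity of $\G^*$) forced by the zeroth-order part of $d(i_\pi\mu)=0$ is precisely what eliminates the modular character of $\G$ from the first-order condition.
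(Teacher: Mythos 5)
Your proof is correct, but it takes a genuinely different route from the paper's. The paper's argument is a two-line application of Koszul's identity \eqref{koszul.formula} for the Schouten bracket: taking $Q=\pi$ and $X$ left invariant, the hypothesis $d(i_\pi\mu)=0$ together with $\mathscr{L}_X\mu=\al\mu$ ($\al$ constant) gives $d\bigl(i_{[X,\pi]}\mu\bigr)=0$, and since $[X,\pi]$ is the left invariant bivector field with value $\xi(X_e)$ at $e$, this is exactly \eqref{unimodular}. You instead expand $d(i_\pi\mu)$ explicitly in the left-trivialization, which splits the hypothesis into a zeroth-order condition at $e$ (your $A=0$, which is $i_\kappa\mu_e=0$, i.e.\ unimodularity of $\G^*$) and a first-order condition $(\star)$; the cocycle identity for $\xi$ together with $A=0$ then kills the extra term in $(\star)$. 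All the individual steps check out: the formula $(i_\pi\mu)_g=L_{g^{-1}}^*(i_{\pi_\ell(g)}\mu_e)$, the identity $(e_j^+\pi_\ell)(g)=\xi(e_j)-\ad_{e_j}\pi_\ell(g)$, and the contraction identity $i_{\ad_uW}\mu_e=\ad_u^*(i_W\mu_e)+\tr(\ad_u)\,i_W\mu_e$ are all correct, and the cancellation in the last paragraph is valid. What the paper's route buys is brevity and the avoidance of your cancellation step entirely; what your route buys is that it also extracts, from the same hypothesis, the unimodularity of $(\G^*,[\,,\,]^*)$ (condition 1 of Theorem \ref{main2}), and your zeroth/first-order splitting is essentially an explicit version of the decomposition $X_\mu=X_m+\kappa^+$ of the modular vector field used later in the proof of Theorem \ref{main2}. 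One presentational remark: it would be worth stating explicitly that $A=i_\kappa\mu_e$, so that the reader sees that the ``$A=0$'' you need for the cancellation is not an extra assumption but the value of $d(i_\pi\mu)$ at the identity.
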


\begin{proof}
The proof is based on the Koszul formula \cite{ko}, satisfied by any vector field $X$ and any multivector $Q$, and  given by
\begin{equation}\label{koszul.formula}
i_{[X,Q]}\mu=i_Xd\left(i_Q\mu\right)+(-1)^{\deg Q}d\left(i_Xi_Q\mu\right)-i_Qd\left(i_X\mu\right).
\end{equation}
Indeed, if $d(i_{\pi}\mu)=0$ then, for any left invariant vector field $X$, we get
$$i_{[X,\pi]}\mu=d\left(i_Xi_\pi\mu\right)-i_\pi d\left(i_X\mu\right).$$
Or $d\left(i_X\mu\right)=\mathscr{L}_X\mu=\al\mu$, where $\al$ is a constant and hence $di_{[X,\pi]}\mu=0.$ One can conclude by using the fact that $[X,\pi]$ is left invariant and $[X,\pi](e)=\xi(X_e)$.
\end{proof}
\subsection{Proof of Theorem \ref{main2}}
\begin{proof} Let $(G,\pi)$ be a connected  unimodular  Poisson-Lie group  and let $\mu$ be a left invariant volume form on $G$.  Let $\xi$ be the $1$-cocycle associated to $\pi$ and let $(G^*,[\,,\,]^*,\rho)$ be the dual Lie bialgebra.
For any tensor $T$ on $\G$, we denote by $T^+$ the corresponding left invariant tensor field on $G$. Recall that the divergence of a vector field $X$ with respect to $\mu$ is the function $\Div_\mu X$ given by
$$\mathscr{L}_{X}\mu=(\Div\nolimits_\mu X)\mu.$$Before giving the proof of the theorem, we need to state some properties of the modular vector field on a  Poisson Lie-group.

As shown in \cite{wei1}, the operator $X_\mu:f\mapsto \Div_\mu{X_f}$  ($X_f$ being the Hamiltonian  vector field associated to $f$) is a derivation and hence a vector field called the \emph{modular vector field} of $(G,\pi)$ with
respect to the volume form $\mu$. It is well-known (see \cite{wei1}) that  $X_\mu$ is given by
\begin{equation}\label{lu1}
d\left(i_\pi\mu\right)=i_{X_\mu}\mu.
\end{equation}
 We define the modular form $\kappa : \mathcal{G}^*\to\mathbb{R}$ by
\begin{equation}
\kappa(\alpha)=\tr\ad\nolimits_\alpha,
\end{equation}
where $\ad_\alpha\beta=[\alpha,\beta]^*$. The modular form $\kappa$, which is in $\mathcal{G}^{**}$, defines a vector  in $\mathcal{G}$ denoted also by $\kappa$. We have
\begin{equation}\label{kappa}
X_\mu(e)=\kappa.
\end{equation}
Indeed, choose a scalar product $\prs$ on $\G$, an orthonormal basis $(u_1,...,u_n)$ of $(\mathcal{G},\langle\,,\,\rangle)$ and denote by  $(\alpha_1,...,\alpha_n)$ its dual basis.  We have $$\pi=\sum_{i<j}\pi_{ij}\,u_i^+\wedge u_j^+$$ and the Hamiltonian vector field associated to $f\in\mathscr{C}^\infty(M)$ is given by $$X_f=\sum_{j=1}^n\left(\sum_{i=1}^n\pi_{ij}\langle df,\al_i^+\rangle^*\right)u_j^+.$$ We have
\begin{align*}
X_\mu(f)=&\Div\nolimits_{\mu}\sum_{j=1}^n\left(\sum_{i=1}^n\pi_{ij}\langle df,\al_i\rangle^*\right)u_j^+\\
=&\sum_{j=1}^n\left(\sum_{i=1}^n\pi_{ij}u_i^+(f)\right)\Div\nolimits_{\mu}u_j^++\sum_{j=1}^n
\sum_{i=1}^nu_j^+\left(\pi_{ij}u_i^+(f)\right).
\end{align*}
Now, since for any $i,j=1,\ldots,n$ $\pi_{ij}(e)=0$ and, because $\G$ is unimodular, $\Div\nolimits_{\mu}u_j^+=0$ for $j=1,\ldots,n$, we get
\[X_\mu(e)=\sum_{i=1}^n\left(\sum_{j=1}^n\mathscr{L}_{X_j^+}\pi(\al_i^+,\al_j^+)_e\right)u_i,\]
and
\begin{align*}
<\alpha_i,X_\mu(e)>=&\sum_{j=1}^n\mathscr{L}_{X_j^+}\pi(\al_i^+,\al_j^+)_e=\sum_{j=1}^n<\alpha_i\wedge\alpha_j,\xi(X_j)>\\
=&\sum_{j=1}^n[\alpha_i,\alpha_j]^*(X_j)=\sum_{j=1}^n\left\langle\sum_{k=1}^n[\alpha_i,\alpha_j]^*(X_k)\alpha_k,\alpha_j\right\rangle^*\\
=&\sum_{j=1}^n\langle[\alpha_i,\alpha_j]^*,\alpha_j\rangle^*=\tr\ad\nolimits_{\alpha_i}=
\kappa(\alpha_i),
\end{align*}and \eqref{kappa} is established.

Now, we will show that $X_\mu-\kappa^+$ is a multiplicative vector field. Indeed,
by applying  \eqref{koszul.formula} and $\mathscr{L}_X\mu=0$, we get
\begin{align*}
i_{[X,X_\mu]}\mu=&\phantom{-}i_Xd\left(i_{X_\mu}\mu\right)-d\left(i_Xi_{X_\mu}\mu\right)-i_{X_\mu}d\left(i_X\mu\right)\\
=&-d\left(i_Xi_{X_\mu}\mu\right),\\
i_{[X,\pi]}\mu=&i_Xd\left(i_\pi\mu\right)+d\left(i_Xi_\pi\mu\right)-i_\pi d\left(i_X\mu\right)\\
=&i_Xi_{X_\mu}\mu+d\left(i_Xi_\pi\mu\right).
\end{align*}
Thus
\begin{equation}\label{formula}
d\left(i_{[X,\pi]}\mu\right)=-i_{[X,X_\mu]}\mu.
\end{equation}
Since $[X,\pi]$ and $\mu$ are left invariant, we deduce from \eqref{formula} that $[X,X_\mu]$ is also left invariant. Moreover, $[X,X_\mu-\kappa^+]=[X,X_\mu]-[X,\kappa^+]$ is left invariant and, since $X_\mu(e)=\kappa^+(e)$, we deduce that $X_\mu-\kappa^+$ is a multiplicative vector field. Thus $X_\mu=X_m+\kappa^+$ where $X_m$ is a multiplicative vector field.

To complete the proof, note that $X_\mu=0$ if and only if $\kappa=0$ and $X_m=0$, i.e., $(\mathcal{G}^*,[\,,\,]^*)$ is unimodular, and  $[X,X_m](e)=0$, for all left invariant vector field $X$. Or the last condition is equivalent, according to \eqref{formula}, to $\rho\left(i_{\xi(u)}\mu\right)=0,$ for any $u\in\G$.
\end{proof}
\section{Examples}
This Section is devoted to the determination of Riemannian Poisson-Lie groups satisfying Hawkins's conditions in the linear case, in dimension $2$, $3$ and $4$.
\paragraph{The linear case} Let $\mathcal{G}=S\oplus [\G,\G]$ be a Milnor Lie algebra. Since $S$ is abelian and acts on $[\G,\G]$ by skew-symmetric endomorphisms, there exists a family of non nul vectors   $u_1,\ldots,u_r\in S$  and an orthonormal basis $(f_1,\ldots,f_{2r})$ of $[\G,\G]$ such that, for any $j=1,\ldots,r$ and for all $s\in S$,
\begin{equation}\label{dual.bialgebra}
[s,f_{2j-1}]=\langle s,u_j\rangle f_{2j}\quad\mbox{and}\quad
[s,f_{2j}]=-\langle s,u_j\rangle f_{2j-1}.
\end{equation}
According to Corollary \ref{corollary1}, the  triple $(\G^*,\pi_\ell,\prs^*)$ satisfies Hawkins's conditions. It is easy to show that there exists a family of constants $(a_{ij})_{1\leq i,j\leq q}$ such that $(\G^*,\pi_\ell,\prs^*)$ is isomorphic to $(\mathbb{R}^{q+2r},\pi_0,\prs_0)$ where $\prs_0$ is the canonical Euclidian metric and
$$\pi_0=\sum_{i=1}^r \left(a_{1i}\partial_{x_1}+\ldots+a_{qi}\partial_{x_q}\right)\wedge\left(y_{2i}\partial_{y_{2i-1}}-
y_{2i-1}\partial_{y_{2i}}\right).$$
\paragraph{The 2-dimensional case} According to Theorems \ref{main1}-\ref{main2} and since any $2$-dimensional Milnor Lie algebra is abelian, a $2$-dimensional connected and simply connected Riemannian Poisson-Lie group $(G,\pi,\prs)$ satisfies Hawkins's conditions if and only if the Poisson tensor is trivial.
\paragraph{The 3-dimensional case} In this paragraph we will determine, up to isomorphism, all the $3$-dimensional connected and simply connected Riemannian Poisson-Lie groups satisfying Hawkins's conditions. According to Theorems \ref{main1}-\ref{main2} and Proposition \ref{cn}, the first step is to determine all the  Lie bialgebra structures on $3$-dimensional Milnor Lie algebras satisfying (\ref{flat}) and (\ref{unimodular}).

Let $\h$ be a $3$-dimensional Milnor Lie algebra. By virtue of \eqref{dual.bialgebra}, there exists a real number $\la\not=0$ and an orthonormal basis $(e_1,e_2,e_3)$ of $\h$ such that
$$[e_2,e_3]=0,\quad [e_1,e_2]=\la e_3\quad\mbox{et}\quad [e_1,e_3]=-\la e_2.$$
We are looking for the 1-cocycles $\rho:\h\too\h\wedge\h$ defining a  Lie bialgebra structure on $\h$ and satisfying (\ref{flat}) and (\ref{unimodular}). Put $$\rho(e_1)=ae_1\wedge e_2+be_1\wedge e_3+ce_2\wedge e_3.$$ The condition  condition (\ref{flat}) is equivalent to
$$ad_{e_1}\circ ad_{e_1}\rho(e_1)=0.$$
We have $ad_{e_1}\rho(e_1)=a\la e_1\wedge e_3-b\la e_1\wedge e_2$ and hence
$$ad_{e_1}\circ ad_{e_1}\rho(e_1)=-a\la^2 e_1\wedge e_2-b\la^2 e_1\wedge e_3.$$ Thus   $\rho$ satisfies (\ref{flat}) if and only if
$$\rho(e_1)=ce_2\wedge e_3.$$
Now put
$$\rho(e_2)=a_1e_1\wedge e_2+b_1e_1\wedge e_3+c_1e_2\wedge e_3 ,\quad
\rho(e_3)=a_2e_1\wedge e_2+b_2e_1\wedge e_3+c_2e_2\wedge e_3,$$
and write down the cocycle condition $\rho([u,v])=ad_u\rho(v)-ad_v\rho(u)$. We get
\begin{eqnarray*}\label{co1}\rho([e_2,e_3])&=&-\la a_2e_3\wedge e_2-\la b_1e_2\wedge e_3=\la(a_2-b_1)e_2\wedge e_3=0,\\
\rho([e_1,e_2])&=&\la(a_1 e_1\wedge e_3- b_1e_1\wedge e_2)=\la\rho(e_3),\\
\rho([e_1,e_3])&=&\la(a_2e_1\wedge e_3-b_2e_1\wedge e_2)=-\la\rho(e_2).
\end{eqnarray*}
These relations are equivalent to
$$b_1=a_2=c_1=c_2=0\quad\mbox{and}\quad a_1=b_2.$$Thus $\rho$ is a 1-cocycle satisfying (\ref{flat}) if and only if
\begin{equation}\label{rho3}\rho(e_1)=ce_2\wedge e_3,\quad \rho(e_2)=ae_1\wedge e_2\quad\mbox{and}\quad
\rho(e_3)=ae_1\wedge e_3.\end{equation}

We consider now $\h^*$ endowed with the bracket associated to $\rho$, the dual scalar product and the dual of the bracket on $\h$, $\xi:\h^*\too\h^*\wedge\h^*$,  given by
\begin{equation}\label{cocycle3}
\xi(e_1^*)=0,\quad\xi(e_2^*)=-\la e_1^*\wedge e_3^*\quad\mbox{and}\quad\xi(e_3^*)=\la e_1^*\wedge e_2^*,\end{equation}where
$(e_1^*,e_2^*,e_3^*)$ is the dual basis of $(e_1,e_2,e_3)$. The bracket on $\h^*$ associated to $\rho$ is given by
\begin{equation}\label{bracket3}[e_1^*,e_2^*]=ae_2^*,\quad [e_1^*,e_3^*]=ae_3^*\quad\mbox{and}\quad
[e_2^*,e_3^*]=ce_1^*.\end{equation}Note that
$$\tr ad_{e_1^*}=2a,\quad \tr ad_{e_2^*}=\tr ad_{e_3^*}=0.$$
The Jacobi identity is given by
\begin{eqnarray*}
\;[[e_1^*,e_2^*],e_3^*]+
[[e_2^*,e_3^*],e_1^*]+
[[e_3^*,e_1^*],e_2^*]=2ace_1^*.\end{eqnarray*}

Let us write down (\ref{unimodular}).  Since $\mu=e_1\wedge e_2\wedge e_3$ and by virtue of (\ref{cocycle3}), a straightforward calculation using \eqref{rho3} gives
\begin{align*}
\rho\left(i_{\xi(e_2^*)}\mu\right)=&\la\rho(e_2)=\la ae_1\wedge e_2,\\
\rho\left(i_{\xi(e_3^*)}\mu\right)=&\la\rho(e_3)=\la ae_1\wedge e_3.
\end{align*}
  In conclusion, $\rho$ defines a Lie bialgebra structure on $\h$ and satisfies (\ref{flat}) and (\ref{unimodular}) if and only if
 \begin{equation}\label{rho3f}\rho(e_1)=ce_2\wedge e_3\quad \mbox{and}\quad \rho(e_2)=
\rho(e_3)=0.\end{equation}Note that in this case, the Lie algebra $\h^*$ is unimodular.
 The following Proposition summarize all the discussion above.
 \begin{proposition} Let $(G,\pi,\prs)$ be a 3-dimensional connected and simply connected Riemannian Poisson-Lie group and let $(\G,\xi,\prs_e)$ be its Lie algebra endowed with the cocycle $\xi$ associated to $\pi$ and the value of the Riemannian metric at the identity. Then $(G,\pi,\prs)$ satisfies Hawkins's conditions if and only if the triple $(\G,\xi,\prs_e)$
 is isomorphic to one of the following triples:
 \begin{enumerate}\item $(\mathbb{R}^3,\xi_0,\prs_0)$ where $\mathbb{R}^3$ is endowed with its abelian Lie algebra structure, $\xi_0$ is given by
 $$\xi_0(e_1)=0,\quad \xi(e_2)=-\la e_1\wedge e_3\quad\mbox{and}\quad\xi(e_3)=\la e_1\wedge e_2,\;\;\la\not=0,$$and $\prs_0$ is the canonical Eucldian scalar product on $\mathbb{R}^3$.
 \item $(\h_3,\xi_0,\prs_0)$ where $\h_3$ the Heisenberg Lie  algebra
 $\left\{\left(\begin{array}{ccc}0&x&z\\0&0&y\\0&0&0\end{array}\right),x,y,z,\in\mathbb{R}^3\right\}$,
 $\xi_0$ is given by
 $$\xi_0(e_3)=0,\quad \xi(e_1)=-\la e_3\wedge e_2\quad\mbox{and}\quad\xi(e_2)=\la e_3\wedge e_1,\;\;\la\not=0,$$and $\prs_0$ is the scalar product on $\h_3$ whose matrix in $(e_1,e_2,e_3)$ is given by
 $\left(\begin{array}{ccc}1&0&0\\0&1&0\\0&0&a\end{array}\right)$, $a>0$.
 \end{enumerate}\end{proposition}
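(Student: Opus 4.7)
The plan is to apply Theorems \ref{main1} and \ref{main2} together with Proposition \ref{cn} to translate Hawkins's conditions into a purely algebraic problem on the dual Lie bialgebra, and then to classify the resulting structures in dimension three. After this reduction, $(G,\pi,\prs)$ satisfies Hawkins's conditions if and only if $(\G^*,[\,,\,]^*,\prs^*_e)$ is a Milnor Lie algebra and the cobracket $\rho:\G^*\too\G^*\wedge\G^*$ (dual of the Lie bracket on $\G$) satisfies \eqref{flat} and \eqref{unimodular}.

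The first step would be to enumerate the $3$-dimensional Milnor Lie algebras. The earlier proposition stating that $[\h,\h]$ has even dimension whenever $\dim S\geq1$ leaves only two options: either $\h$ is abelian, in which case $\xi\equiv0$ and $\pi=0$ (a degenerate situation), or $\h$ admits an orthonormal basis $(e_1,e_2,e_3)$ with $[e_1,e_2]=\la e_3$, $[e_1,e_3]=-\la e_2$, $[e_2,e_3]=0$ for some $\la\neq0$.

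For the non-abelian $\h=\G^*$, I would determine all admissible cobrackets $\rho$ by direct computation in the basis $(e_i\wedge e_j)$. Writing $\rho(e_1)=ae_1\wedge e_2+be_1\wedge e_3+ce_2\wedge e_3$ and imposing \eqref{flat} through $\ad\nolimits_{e_1}\ad\nolimits_{e_1}\rho(e_1)=0$ forces $a=b=0$, so $\rho(e_1)=ce_2\wedge e_3$. The $1$-cocycle identity $\rho([u,v])=\ad\nolimits_u\rho(v)-\ad\nolimits_v\rho(u)$, evaluated on the three defining relations of $\h$, then pins down $\rho(e_2)=a\,e_1\wedge e_2$, $\rho(e_3)=a\,e_1\wedge e_3$ with a common scalar $a$ and kills the remaining coefficients; the Jacobi identity on $\h^*$ collapses to $2ac=0$; and condition \eqref{unimodular}, with $\xi$ read off by dualising the Milnor bracket of $\h$, kills $a$ outright. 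What survives is the one-parameter family $\rho(e_1)=ce_2\wedge e_3$, $\rho(e_2)=\rho(e_3)=0$, and the modular form $\kappa=\tr\ad$ on $\h^*$ then vanishes automatically, so $(\h^*,[\,,\,])$ is unimodular.

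It remains to dualise back to $\G$. When $c=0$, the bracket on $\G=\h^*$ vanishes, so $\G\cong\R^3$ carries the canonical Euclidean metric and the cocycle $\xi_0$ is precisely the dual of the Milnor bracket on $\h$; this yields case~1. When $c\neq0$, the dual bracket is $[e_2^*,e_3^*]=ce_1^*$ with all other brackets zero, i.e.\ the Heisenberg bracket, producing case~2; a rescaling of the basis absorbs $\la$ and $c$, and the sole residual freedom in the metric is a single scale $\langle e_3,e_3\rangle=a>0$. The main technical obstacle is the cocycle analysis itself: one must check that \eqref{flat}, the cocycle identity, Jacobi on $\h^*$ and \eqref{unimodular} together really collapse the generic nine-parameter family of candidate cobrackets to the one-parameter family above, with no sporadic solutions escaping.
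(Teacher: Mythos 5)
Your proposal is correct and follows essentially the same route as the paper: reduce to the dual Lie bialgebra via Theorems \ref{main1}--\ref{main2} and Proposition \ref{cn}, use the normal form of a $3$-dimensional Milnor Lie algebra, show \eqref{flat} forces $\rho(e_1)=ce_2\wedge e_3$, let the cocycle and Jacobi identities together with \eqref{unimodular} kill the remaining parameters, and then dualise to obtain the abelian and Heisenberg cases. The computational outcomes you assert (including $2ac=0$ from Jacobi and $a=0$ from \eqref{unimodular}) agree with the paper's.
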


 The infinitesimal situations in this Proposition can be integrated easily which leads to the following theorem.

 \begin{theorem}\label{dim3} Let $(G,\pi,\prs)$ be a connected and simply connected 3-dimensional Riemannian Poisson-Lie group. If $(\pi,\prs)$ satisfies Hawkins's conditions then $(G,\pi,\prs)$ is isomorphic to:
\begin{enumerate}\item $(\mathbb{R}^3,\pi,\prs)$ where $\mathbb{R}^3$ is endowed with its abelian Lie group structure, $\prs$ is the canonical Euclidian metric and
$$\pi=\la\partial_x\wedge(z\partial_y-y\partial_z),$$where $\la\in\mathbb{R}^*$ or,
\item $(H_3,\pi,\prs)$ where $H_3=\left\{\left(\begin{array}{ccc}1&x&z\\0&1&y\\0&0&1\end{array}\right),x,y,z,\in\mathbb{R}^3\right\}$ and $$\pi=\la(x\partial_y-y\partial_x)\wedge\partial_z,\;
    \prs=dx^2+dy^2+a(dz-xdy)^2,$$ where
    $\la\in\mathbb{R}^*$ and $a>0$.\end{enumerate}\end{theorem}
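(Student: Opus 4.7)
The plan is to integrate the infinitesimal classification supplied by the preceding Proposition. That Proposition reduces the problem to two models of $(\G,\xi,\prs_e)$: the abelian Lie algebra $\R^3$ with the canonical scalar product, and the Heisenberg algebra $\h_3$ with the scalar product of matrix $\mathrm{diag}(1,1,a)$. In each case, since $G$ is connected and simply connected, it is the unique simply connected group with the given Lie algebra, and the cocycle $\xi$ integrates to a unique multiplicative Poisson tensor $\pi$ determined by $\pi(e)=0$ and $d_e\pi_\ell=\xi$; so in each case it suffices to exhibit such a $\pi$ in global coordinates and express $\prs$ globally.

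For the abelian case, I would take $G=\R^3$ with additive law. The multiplicative condition $\pi(a+b)=(L_a)_*\pi(b)+(R_b)_*\pi(a)$ collapses, in the canonical translation-invariant frame, to $\pi(a+b)=\pi(a)+\pi(b)$, so the coefficients of $\pi$ must be linear in $(x,y,z)$. Reading off $\xi_0(e_1)=0$, $\xi_0(e_2)=-\la e_1\wedge e_3$, $\xi_0(e_3)=\la e_1\wedge e_2$ at the identity yields at once $\pi=\la\,\partial_x\wedge(z\partial_y-y\partial_z)$, and $\prs$ is the Euclidean metric.

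For the Heisenberg case, I would realize $H_3$ as in the statement, with group law $(x_1,y_1,z_1)(x_2,y_2,z_2)=(x_1+x_2,y_1+y_2,z_1+z_2+x_1y_2)$. A direct computation gives the left-invariant frame $e_1^L=\partial_x$, $e_2^L=\partial_y+x\partial_z$, $e_3^L=\partial_z$ with dual coframe $dx,dy,dz-xdy$; hence the metric of matrix $\mathrm{diag}(1,1,a)$ in the basis $(e_1,e_2,e_3)$ globalizes to $\prs=dx^2+dy^2+a(dz-xdy)^2$. I would then propose $\pi=\la(x\partial_y-y\partial_x)\wedge\partial_z$ and check three points: (i) multiplicativity, by pushing $\pi(b)$ under $L_a$ and $\pi(a)$ under $R_b$ and observing that the induced $\partial_z$-corrections are killed by the external wedge with $\partial_z$; (ii) the Poisson property, from $\{x,y\}=0$, $\{x,z\}=-\la y$, $\{y,z\}=\la x$, whose Jacobiator vanishes by inspection; (iii) pulling back by $L_{g^{-1}}$ yields $\pi_\ell(g)=\la x\,e_2\wedge e_3-\la y\,e_1\wedge e_3$, whose intrinsic derivative at $e$ reproduces $\xi_0$.

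The main---and essentially only---obstacle is the bookkeeping in step (i): one has to track that $(L_a)_*\partial_{y_2}=\partial_y+x_1\partial_z$ and $(R_b)_*\partial_{x_1}=\partial_x+y_2\partial_z$ at $ab$, with the extra $\partial_z$-terms killed because they appear only inside $\wedge^2$-expressions containing a second $\partial_z$ factor. Once this is checked, uniqueness of the Poisson--Lie integration on the simply connected group $G$ (combined with the Proposition) shows that the two exhibited triples exhaust all possibilities.
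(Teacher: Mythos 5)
Your proposal is correct and follows exactly the route the paper intends: the paper simply states that the infinitesimal models of the preceding Proposition "can be integrated easily," and your argument supplies precisely that integration (uniqueness of the multiplicative Poisson tensor on the simply connected group with prescribed cocycle, plus the explicit coordinate verifications). The computations you sketch — linearity of $\pi$ in the abelian case, the left-invariant coframe $dx,\,dy,\,dz-x\,dy$ on $H_3$, and the cancellation of the $\partial_z$-corrections under the wedge with $\partial_z$ in the multiplicativity check — are all accurate.
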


\paragraph{The 4-dimensional case}

In this paragraph we will determine, up to isomorphism, all the 4-dimensional Riemannian Poisson-Lie groups satisfying Hawkins's conditions. According to Theorems \ref{main1}-\ref{main2} Proposition \ref{cn}, the first step is to determine all the  Lie bialgebra structures on 4-dimensional Milnor Lie algebras satisfying (\ref{flat}) and (\ref{unimodular}).

Let  $\h$ be a 4-dimensional Milnor Lie algebra. By virtue of \eqref{dual.bialgebra}, there exists non nul real numbers $\la_1,\la_2$ and an orthonormal basis $(s_1,s_2,f_1,f_2)$ of $\h$ such that
$$[s_1,s_2]=[f_1,f_2]=0,\quad [s_i,f_1]=\la_i f_2\quad\mbox{and}\quad[s_i,f_2]=-\la_i f_1.$$
Put $e_1=\frac{\la_2s_1-\la_1s_2}{\|\la_2s_1-\la_2s_2\|}$. Then there exists $e_2\in S$ such that $(e_1,e_2,f_1,f_2)$ is an orthogonal basis,
$$[e_2,f_1]= f_2,\quad [e_2,f_2]=- f_1,$$and all the other brackets vanish. Note that $\|e_1\|=\|f_1\|=\|f_2\|=1$.

We are looking for the 1-cocycles $\rho:\h\too\h\wedge\h$ defining a  Lie bialgebra structure on $\h$ and satisfying (\ref{flat}) and (\ref{unimodular}).
Put $$\rho(e_i)=a_ie_1\wedge e_2+b_ie_1\wedge f_1+c_ie_1\wedge f_2+
d_ie_2\wedge f_1+f_ie_2\wedge f_2+g_if_1\wedge f_2.$$We have
\begin{eqnarray*}
ad_{e_2}\rho(e_i)&=& b_ie_1\wedge f_2- c_ie_1\wedge f_1+
 d_ie_2\wedge f_2- f_ie_2\wedge f_1,\\
ad_{e_2}\circ ad_{e_2}\rho(e_i)&=&-b_ie_1\wedge f_1-c_ie_1\wedge f_2-
d_ie_2\wedge f_1-f_ie_2\wedge f_2.\end{eqnarray*}

Thus $\rho$ satisfies (\ref{flat}) if and only if, for $i=1,2$,
 $$\rho(e_i)=\al_ie_1\wedge e_2+\be_if_1\wedge f_2.$$
Now, put
\begin{eqnarray*}
\rho(f_i)&=&a_ie_1\wedge e_2+b_ie_1\wedge f_1+c_ie_1\wedge f_2+
d_ie_2\wedge f_1+g_ie_2\wedge f_2+h_if_1\wedge f_2,
\end{eqnarray*}and write down the cocycle condition $\rho([u,v])=ad_u\rho(v)-ad_v\rho(u)$. First, we get
\begin{eqnarray*}
\rho([f_1,f_2])&=&- a_2e_1\wedge f_2
- d_2f_2\wedge f_1- a_1e_1\wedge f_1
-g_1f_1\wedge f_2=0,\end{eqnarray*}thus
$$a_1=a_2=0\quad\mbox{and}\quad d_2-g_1=0.$$
On the other hand,
\begin{eqnarray*}
\rho([e_1,f_1])&=&\al_1e_1\wedge f_2=0,\\
\rho([e_2,f_1])&=& b_1e_1\wedge f_2- c_1e_1\wedge f_1+
 d_1e_2\wedge f_2- g_1e_2\wedge f_1+\al_2 e_1\wedge f_2\\&=&\rho(f_2),\\
\rho([e_1,f_2])&=&-\al_1 e_1\wedge f_1=
0,\\
\rho([e_2,f_2])&=& b_2e_1\wedge f_2- c_2e_1\wedge f_1+
 d_2e_2\wedge f_2- g_2e_2\wedge f_1-\al_2e_1\wedge f_1\\&=&
- \rho(f_1).
\end{eqnarray*}
These relations are equivalent to$$b_2=-c_1,\; c_2=b_1,\; d_2=-g_1,\; g_2=d_1=\al_i=h_i=0.$$
Hence, $\rho$ is a 1-cocycle satisfying (\ref{flat}) if and only if
\begin{equation}\label{rho4}\begin{array}{ccl}
\rho(e_i)&=&\be_i f_1\wedge f_2,\\
\rho(f_1)&=&be_1\wedge f_1+ce_1\wedge f_2+
de_2\wedge f_1,\\
\rho(f_2)&=&-ce_1\wedge f_1+be_1\wedge f_2+de_2\wedge f_2.\end{array}\end{equation}We consider now $\h^*$ endowed with the bracket associated to $\rho$, the dual scalar product and the dual of the bracket on $\h$, $\xi:\h^*\too\h^*\wedge\h^*$,  given by
\begin{equation}\label{cocycle4}\begin{array}{ccl}
\xi(e_1^*)&=&\xi(e_2^*)=0,\\
\xi(f_1^*)&=&- e_2^*\wedge f_2^*,\\
\xi(f_2^*)&=& e_2^*\wedge f_1^*,\end{array}\end{equation}where
$(e_1^*,e_2^*,f_1^*,f_2^*)$ is the dual basis of $(e_1,e_2,f_1,f_3)$. The bracket on $\h^*$ associated to $\rho$ is given by
\begin{equation}\label{bracket4}\begin{array}{lll}
\;[e_1^*,e_2^*]=0,&
\;[e_1^*,f_1^*]=bf_1^*-cf_2^*,&
\;[e_1^*,f_2^*]=cf_1^*+bf_2^*,\\
\;[e_2^*,f_1^*]=df_1^*,&
\;[e_2^*,f_2^*]=df_2^*,&\;[f_1^*,f_2^*]=\be_1e_1^*+\be_2e^*_2.\end{array}\end{equation}Note that
\begin{equation}\label{trace4}\tr ad_{e_1^*}=2b,\quad \tr ad_{e_2^*}=2d,\quad \tr ad_{f_1^*}=\tr ad_{f_3^*}=0.\end{equation}
The Jacobi identities are given by:
 \begin{eqnarray*}
 \;[[e_1^*,e_2^*],f_1^*]+
 [[e_2^*,f_1^*],e_1^*]+[[f_1^*,e_1^*],e_2^*]&=&0,\\
\;[[e_1^*,e_2^*],f_2^*]+[[e_2^*,f_2^*],e_1^*]+[[f_2^*,e_1^*],e_2^*]&=&0,\\
 \;[[e_1^*,f_1^*],f_2^*]+[[f_1^*,f_2^*],e_1^*]+[[f_2^*,e_1^*],f_1^*]&=&2b[f_1^*,f_2^*],\\
 \;[[e_2^*,f_1^*],f_2^*]+[[f_1^*,f_2^*],e_2^*]+[[f_2^*,e_2^*],f_1^*]&=&
 2d[f_1^*,f_2^*].\\
 \end{eqnarray*}
Let us write down (\ref{unimodular}).  Since $\mu=e_1\wedge e_2\wedge f_1\wedge f_2$  and by virtue of (\ref{cocycle4}),
a straightforward computation using \eqref{rho4} gives
\begin{eqnarray*}
\rho\left(i_{\xi(f_1^*)}\mu\right)&=& de_1\wedge e_2\wedge f_1,\\
\rho\left(i_{\xi(f_2^*)}\mu\right)&=& de_1\wedge e_2\wedge f_2.
\end{eqnarray*}

The following proposition summarize all the computation above.
\begin{proposition} Let $(G,\pi,\prs)$ be a 4-dimensional connected and simply connected Riemannian Poisson-Lie group and let $(\G,\xi,\prs_e)$ be its Lie algebra endowed with the cocycle $\xi$ associated to $\pi$ and the value of the Riemannian metric at the identity. If $(G,\pi,\prs)$ satisfies Hawkins's conditions then the triple $(\G,\xi,\prs_e)$
 is isomorphic to $(\mathbb{R}^4,\xi_0,\prs_0)$ where:
\begin{enumerate}\item in the canonical basis $(e_0,e_1,e_2,e_3)$ of $\mathbb{R}^4$, the Lie bracket is given by
\begin{equation*}\begin{array}{lll}
\;[e_1,e_2]=be_2-ce_3,&
\;[e_1,e_3]=ce_2+be_3,&
\;[e_2,e_3]=\be_1e_0+\be_2e_1,\\\;[e_0,e_i]=0,\quad i=1,2,3,&&\\\end{array}\end{equation*}and
$$b[e_2,e_3]=0;$$
\item the cocycle $\xi_0$ is given, up to a  multiplicative constant, by
$$\xi_0(e_0)=\xi_0(e_1)=0,\quad \xi_0(e_2)= e_0\wedge e_3,\quad \xi_0(e_3)=- e_0\wedge e_2,$$
\item the product $\prs_0$ is the canonical Euclidian scalar product of $\mathbb{R}^4$.\end{enumerate}\label{proposition4} \end{proposition}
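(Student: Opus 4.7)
The plan is to combine the structural information already assembled in the preceding discussion. The Milnor decomposition and the orthogonal basis $(e_1,e_2,f_1,f_2)$ of $\h=\G^*$ exhibited just before the proposition take care of the flatness part of Theorem \ref{main1}, while the form \eqref{rho4} of $\rho$ incorporates the metaflatness equation \eqref{flat}. What remains is to extract the consequences of \eqref{unimodular}, which is necessary by Proposition \ref{cn}, and to impose the Jacobi identity on the induced bracket \eqref{bracket4} on $\G=\h^*$.

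First I would evaluate \eqref{unimodular} on each basis vector of $\G$. Since $\xi(e_1^*)=\xi(e_2^*)=0$ by \eqref{cocycle4}, only $u=f_1^*$ and $u=f_2^*$ produce nontrivial equations, and the formulas $\rho(i_{\xi(f_i^*)}\mu)=d\,e_1\wedge e_2\wedge f_i$ computed just above force $d=0$.

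Next I would verify the four Jacobi identities for \eqref{bracket4} with $d=0$. A short direct check shows that the first and second identities vanish identically (all inner brackets become zero) and the fourth reduces to $2d\,[f_1^*,f_2^*]=0$, automatic. Only the third identity yields a genuine constraint, namely $2b\,[f_1^*,f_2^*]=0$, i.e.\ $b\be_1=b\be_2=0$; this is precisely the condition $b[e_2,e_3]=0$ appearing in the proposition, after the basis relabeling described next.

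Finally I would write down the explicit isomorphism $(\G,\xi,\prs_e)\cong(\R^4,\xi_0,\prs_0)$. When $d=0$, $e_2^*$ is central in $\G$; rescaling it by $\|e_2\|$ and keeping $e_1^*,f_1^*,f_2^*$ unchanged yields an orthonormal basis of $(\G,\prs_e)$. Labeling these four vectors respectively as $e_0,e_1,e_2,e_3$ turns \eqref{bracket4} into exactly the brackets displayed in the proposition (up to a harmless renaming of the free parameters $\be_1,\be_2$), and makes \eqref{cocycle4} agree with the stated $\xi_0$ up to the scalar $-\|e_2\|^{-1}$, absorbed by the permitted multiplicative constant. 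The only real care needed is in tracking this rescaling consistently across cocycle, bracket, and metric; all the hard constraints were already built into \eqref{rho4}, so no further obstacle arises.
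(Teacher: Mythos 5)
Your proposal is correct and follows essentially the same route as the paper, whose proof of Proposition \ref{proposition4} is exactly the computation preceding it: the normal form \eqref{rho4} from metaflatness and the cocycle condition, $d=0$ from \eqref{unimodular} (necessary by Proposition \ref{cn}), $b[f_1^*,f_2^*]=0$ from the only nontrivial Jacobi identity, and the relabeling and rescaling of the dual basis. Nothing essential is missing.
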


\begin{remark} When $b=0$, the Lie algebra structure of $\mathbb{R}^4$ given in Proposition \ref{proposition4} is unimodular and, according to Theorem \ref{main2}, the converse of Proposition \ref{proposition4} is true, i.e., the triple $(G,\pi,\prs)$ integrating $(\mathbb{R}^4,\xi_0,\prs_0)$ satisfies Hawkins's conditions.

 However, when $b\not=0$, the triple $(G,\pi,\prs)$ integrating $(\mathbb{R}^4,\xi_0,\prs_0)$ is flat and metaflat and one must check if the last Hawkins's condition is satisfied. We will see that it does.

\end{remark}

The task now is the construction of the triples $(G,\pi,\prs)$ associated to the different models isomorphic to the triple $(\mathbb{R}^4,\xi_0,\prs_0)$ given in Proposition \ref{proposition4}. The computation is very long so we omit it. Note that the determination of the Lie groups is easy since all the models of Lie algebras are product or semi-direct product. The determination of the multiplicative Poisson tensor from the 1-cocycle is a direct calculation using the method exposed in \cite{du} Theorem 5.1.3.

\begin{enumerate}\item {\bf Unimodular case} $b=0$.
\begin{enumerate}\item If $c=\be_1=\be_2=0$ then $(G,\pi,\prs)$ is isomorphic to $(\mathbb{R}^4,\pi_0,\prs_0)$ where $\mathbb{R}^4$ is endowed with its abelian Lie group structure and
$$\pi_0=\partial_x\wedge\left(z\partial_t-t\partial_z\right)\quad\mbox{and}\quad
\prs_0=dx^2+dy^2+dz^2+dt^2.$$
\item If $c=0$ and $\be_1\not=0$ then $(G,\pi,\prs)$ is isomorphic to $(H_0,\pi_0,\prs_0)$ where
    $$H_0=\left\{\left(\begin{array}{cccc}x&0&0&0\\0&1&y&t\\0&0&1&z\\0&0&0&1\end{array}\right),
    x>0,y,z,t\in\mathbb{R}\right\},$$
    $$\be_1\pi_0=(\partial_t-\be_2x\partial_x)\wedge(y\partial_z-z\partial_y)
 +\frac12\be_2(z^2-y^2)x\partial_x\wedge\partial_t,$$and
 $$\prs_0=(x^{-1}dx+\be_2dt-\be_2ydz)^2+dy^2+dz^2+\be_1^2(dt-ydz)^2.$$
 \item If $c=0$ $\be_1=0$ and $\be_2\not=0$ then $(G,\pi,\prs)$ is isomorphic to $(H_0,\pi_0,\prs_0)$ where
    $$H_0=\left\{\left(\begin{array}{cccc}x&0&0&0\\0&1&y&t\\0&0&1&z\\0&0&0&1\end{array}\right),
    x>0,y,z,t\in\mathbb{R}\right\},$$
    $$\pi_0=x\partial_x\wedge(y\partial_z-z\partial_y)
 +\frac12(y^2-z^2)x\partial_x\wedge\partial_t,$$and
 $$\prs_0=\frac1{x^2}dx^2+dy^2+dz^2+\be_2^2(dt-ydz)^2.$$

\item If $c\not=0$ and $(\be_1,\be_2)=(0,0)$ then $(G,\pi,\prs)$ is isomorphic to
$(\mathbb{R}^4,\pi_0,\prs_0)$ where $\mathbb{R}^4$ is endowed with the Lie group structure given by
$$u.v=(x+x',y+y',z+z'\cos y+t'\sin y,t-z'\sin y+t'\cos y)$$when
$u=(x,y,z,t)$ and $v=(x',y',z',t')$, and
$$\pi_0=\partial_x\wedge(z\partial_t-t\partial_z)\quad\mbox{and}
\quad \prs_0=dx^2+ady^2+dz^2+dt^2,$$where $a>0$.
\item If $c\not=0$, $\be_2=0$ and $\be_1\not=0$ then $(G,\pi,\prs)$  is isomorphic to $(\mathbb{R}^2\times\C,\pi_0,\prs_0)$ where $\mathbb{R}^2\times\C$ is endowed with the structure of oscillator group given by
$$(t,s,z).(t',s',z')=\left(t+t',s+s'+\frac12\mbox{Im}\left(\bar{z}exp(it)z'\right),z+exp(it)z'\right),$$
and
$$\pi_0=\partial_s\wedge(x\partial_y-y\partial_x),\quad \prs_0=a dt^2+bds^2+ds(ydx-xdy)+\frac14(ydx-xdy)^2,$$where $a>0$ and $b>0$.

\item If $c\not=0$, $\be_2\not=0$ then $(G,\pi,\prs)$ is isomorphic to $(\mathbb{R}\times G_0,\pi_0,\prs_0)$ where $\mathbb{R}\times G_0$ is the direct product of the abelian group $\mathbb{R}$ with $G_0$ where $G_0$ is either $SU(2)$ or $\wi{SL(2,\mathbb{R})}$ and
    if $\{E_1,E_2,E_3\}$ is a the basis of the Lie algebra of $G_0$ satisfying
$$[E_1,E_2]=E_3,\; [E_3,E_1]=E_2\quad\mbox{and}\quad[E_2,E_3]=\pm E_1$$then
$$\pi=\partial_t\wedge(E_1^+-E_1^-)$$where $E^+_1$ (resp. $E_1^+$) is the left invariant  (resp. right invariant) vector field associated to $E_1$. On the other hand, $\prs_0$ is the left invariant Riemannian metric on $\mathbb{R}\times G_0$ whose value at the identity has the following matrix in the basis $\{E_0,E_1,E_2,E_3\}$
$$\left(\begin{array}{cccc}a&b&0&0\\0&c&0&0\\0&0&d&0\\0&0&0&d\end{array}\right).$$

\end{enumerate}

\item {\bf the non unimodular case:} $b\not=0$. In this case  $(G,\pi,\prs)$ is isomorphic to
$(\mathbb{R}^4,\pi_0,\prs_0)$ where $\mathbb{R}^4$ is endowed with the Lie group structure given by
\begin{eqnarray*}uv&=&\left(x+x',y+y',z+e^{xb}(z'\cos(xc)+t'\sin(xc)),
t+e^{xb}(-z'\sin(xc)+t'\cos(xc))\right).\end{eqnarray*}
when
$u=(x,y,z,t)$ and $v=(x',y',z',t')$,
$$\pi_0=\partial_y\wedge(z\partial_t-t\partial_z)\quad\mbox{and}
\quad \prs_0=dx^2+dy^2+e^{-2bx}(dz^2+dt^2).$$

The Riemannian volume is given by
$$\mu=e^{-2bx}dx\wedge dy\wedge dz\wedge dt,$$and
$$i_{\pi}\mu=-e^{-2bx}(zdx\wedge dz+tdx\wedge dt).$$
Thus $d\left(i_{\pi}\mu\right)=0,$ and the third Hawkins's condition is satisfied.
\end{enumerate}
\paragraph{Acknowledgement}
Amine BAHAYOU would like to thank Philippe Monnier for very useful discussions and Emile Picard Laboratory, at Paul Sabatier University of Toulouse (France), for hospitality where a part of this work was done.


Amine BAHAYOU, Universit\'e Kasdi Merbah,\\
B.P 511 Route de Ghardaïa, 30000 Ouargla Algeria\\
\url{amine.bahayou@gmail.com}\\               
Mohamed BOUCETTA,\\
Facult\'e des sciences et techniques Gueliz\\BP 549 Marrakech Maroc\\
\url{mboucetta2@yahoo.fr}
\end{document}